\numberwithin{equation}{section} 
\theoremstyle{plain} 
\newtheorem{theorem}{Theorem}[section] 
\newtheorem{lemma}[theorem]{Lemma} 
\newtheorem{corollary}[theorem]{Corollary} 
\theoremstyle{definition} 
\newtheorem{problem}{Problem}
\theoremstyle{remark}
\newtheorem*{remark}{Remark}
\DeclareMathOperator{\mre}{Re} 
\newcommand{\mh}{m_{\mathrm{h}}}
\begin{document} 
\title[Contractive Hardy--Littlewood inequalities]{Contractive Hardy--Littlewood inequalities in the Dirichlet range} 
\date{\today} 

\author[Brevig]{Ole Fredrik Brevig} 
\address{Department of Mathematical Sciences, Norwegian University of Science and Technology (NTNU), 7491 Trondheim, Norway} 
\email{ole.brevig@ntnu.no}

\author[Kulikov]{Aleksei Kulikov} 
\address{University of Copenhagen, Department of Mathematical Sciences, Universitetsparken 5, 2100 Copenhagen, Denmark} 
\email{lyosha.kulikov@mail.ru}

\author[Seip]{Kristian Seip} 
\address{Department of Mathematical Sciences, Norwegian University of Science and Technology (NTNU), 7491 Trondheim, Norway} 
\email{kristian.seip@ntnu.no}

\author[Zlotnikov]{Ilya Zlotnikov} 
\address{Department of Mathematical Sciences, Norwegian University of Science and Technology (NTNU), 7491 Trondheim, Norway} 
\email{ilia.k.zlotnikov@ntnu.no}

\thanks{Ole Fredrik Brevig was supported by Grant 354537 of the Research Council of Norway. Aleksei Kulikov was supported by the VILLUM Centre of Excellence for the Mathematics of Quantum Theory (QMATH) with Grant No.10059. Kristian Seip and Ilya Zlotnikov were supported by Grant 334466 of the Research Council of Norway.}

\maketitle
\begin{abstract}
	The class $A_\alpha^p$ consists of those analytic functions $f$ in the unit disc such that
	\[\|f\|_{\alpha,p}^p \coloneq |f(0)|^p+\int_0^1 \left(\frac{d}{dr} M_p^p(r,f)\right) (1-r^2)^{\alpha-1} \,dr < \infty,\]
	where $M_p^p(r,f)$ is the radial integral mean of $|f|^p$ and $0<\alpha, p <\infty$. For $\alpha>1$, $A_\alpha^p$ is the standard weighted Bergman space, and $A_1^p=H^p$. We consider $A_\alpha^p$ for $0<\alpha<1$ and show that (weighted) isometric conformal invariance extends to this range, and we also clarify the relation between $A_{\alpha}^p$ and the classical Besov spaces. Our main result is the contractive inequality $\|f\|_{\beta,q} \leq \|f\|_{\alpha,p}$, valid when $0<\alpha<\beta<\infty$ and $\alpha/p=\beta/q$. We also identify the functions for which equality is attained. We thus extend recent results of the second-named author ($1\leq \alpha<\beta$) and Llinares ($\beta=1$ and $p=2$). The extension of results from the classical range $1\leq \alpha < \infty$ to the Dirichlet range $0<\alpha <1$ uses arguments relying on analytic continuation. 
\end{abstract}

\section{Introduction} Let $f$ be an analytic function in the unit disc $\mathbb{D}$ in the complex plane. Consider the integral means 
\[M_p^p(r,f) \coloneq \int_0^{2\pi} |f(r e^{i\theta})|^p \,\frac{d\theta}{2\pi}\]
for $0<p<\infty$ and $0<r<1$. An immediate consequence of the well-known Hardy--Stein identity (see \cite{Stein1933} or \eqref{eq:hardystein} below) is that the function $r \mapsto M_p^p(r,f)$ is increasing and differentiable for $0<r<1$. It follows that if $0<p<\infty$ and $0<\alpha<\infty$, then the quantity 
\begin{equation}\label{eq:Apa} 
	\|f\|_{\alpha,p}^p \coloneq |f(0)|^p+\int_0^1 \left(\frac{d}{dr} M_p^p(r,f)\right) (1-r^2)^{\alpha-1} \,dr, 
\end{equation}
is well-defined and nonnegative. Let $A^p_\alpha$ stand for the class of all analytic functions $f$ in $\mathbb{D}$ such that $\|f\|_{\alpha,p} < \infty$. 

Integration by parts shows that if $1<\alpha<\infty$, then 
\begin{equation}\label{eq:bergman} 
	\|f\|_{\alpha,p}^p = (\alpha-1)\int_{\mathbb{D}} |f(z)|^p \,(1-|z|^2)^{\alpha-2} \, dm(z), 
\end{equation}
where $m$ denotes Lebesgue area measure normalized so that $m(\mathbb{D})=1$. This means that, in this range, the classes $A^p_\alpha$ coincide with the standard weighted Bergman spaces. Similarly, $A^p_1$ is the classical Hardy space $H^p$. The purpose of the present paper is to study $A^p_\alpha$ in the Dirichlet range $0<\alpha<1$.

The reasoning behind definition \eqref{eq:Apa} is that the classes $A^p_\alpha$ in the Dirichlet range should inherit certain \emph{geometric} properties of the Hardy and Bergman spaces. Our first result in this direction concerns (weighted) conformal invariance. If $f$ is an analytic function in $\mathbb{D}$, then we set
\[T_{w,\kappa} f(z) \coloneq f\left(\frac{w-z}{1-\overline{w}z}\right) \frac{(1-|w|^2)^\kappa}{(1-\overline{w}{z})^{2\kappa}}\]
for a point $w$ in $\mathbb{D}$ and $\kappa>0$. The following result is well known in the range $1 \leq \alpha < \infty$ (see e.g.~\cite{AM2021}*{Example~1}) and can be deduced from \eqref{eq:bergman} by a change of variables. 
\begin{theorem}\label{thm:mobinv} 
	Fix $0<\alpha<\infty$ and $0<p<\infty$. If $f$ is in $A^p_\alpha$, then so is $T_{w,\alpha/p} f$ for every $w$ in $\mathbb{D}$ and
	\[\|f\|_{\alpha,p} = \|T_{w,\alpha/p}f\|_{\alpha,p}.\]
\end{theorem}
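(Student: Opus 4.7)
The plan is to establish the identity first in the classical range $\alpha \geq 1$ by direct computation, then to extend to the Dirichlet range $0 < \alpha < 1$ by analytic continuation in $\alpha$ for functions analytic in a neighborhood of $\overline{\mathbb D}$, and finally to pass to general $f \in A_\alpha^p$ by a dilation argument.

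For $\alpha > 1$, formula \eqref{eq:bergman} expresses $\|\cdot\|_{\alpha,p}^p$ as a weighted $L^p$ integral on $\mathbb D$. The substitution $\zeta = \varphi_w(z)$ with $\varphi_w(z) := (w-z)/(1-\overline{w}z)$, combined with the standard identities $1 - |z|^2 = (1-|w|^2)(1-|\zeta|^2)/|1-\overline{w}\zeta|^2$ and $dm(z) = (1-|w|^2)^2 |1-\overline{w}\zeta|^{-4}\, dm(\zeta)$, cancels the powers of $(1-|w|^2)$ and $|1-\overline{w}\zeta|$ arising from the definition of $T_{w,\alpha/p}$ and yields the identity after a bookkeeping of exponents. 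The case $\alpha = 1$ is the classical conformal invariance of the Hardy space norm under $T_{w,1/p}$.

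For $0 < \alpha < 1$ and $f$ analytic in a neighborhood of $\overline{\mathbb D}$, I would use analytic continuation in $\alpha$. The key observation is that, for real $\alpha > 0$,
\[M_p^p(r, T_{w,\alpha/p}f) = (1-|w|^2)^\alpha \int_0^{2\pi} |f(\varphi_w(re^{i\theta}))|^p \,|1-\overline{w}r e^{i\theta}|^{-2\alpha}\, \frac{d\theta}{2\pi},\]
and the $\alpha$-dependent factors appearing here---$(1-|w|^2)^\alpha$, $|1-\overline{w}re^{i\theta}|^{-2\alpha}$, and the weight $(1-r^2)^{\alpha-1}$ of \eqref{eq:Apa}---all involve positive real quantities raised to the power $\alpha$ and hence extend holomorphically to complex $\alpha$. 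Since $f$ and $f'$ are bounded in a neighborhood of $\overline{\mathbb D}$, the Hardy--Stein identity provides uniform control of $\frac{d}{dr}M_p^p(r,T_{w,\alpha/p}f)$ as $r\to1^-$, and Fubini together with Morera's theorem show that both $\alpha \mapsto \|f\|_{\alpha,p}^p$ and the analogous holomorphic continuation of $\alpha \mapsto \|T_{w,\alpha/p}f\|_{\alpha,p}^p$ are holomorphic on $\{\mre\alpha > 0\}$. They agree on $(1,\infty)$ by the previous case, hence on the whole right half-plane by the identity theorem, and in particular at any real $\alpha \in (0,1)$.

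To pass to general $f \in A_\alpha^p$ with $0 < \alpha < 1$, apply the just-established identity to the dilations $f_\rho(z) := f(\rho z)$, which are analytic near $\overline{\mathbb D}$. Using the Littlewood--Paley representation $\|f\|_{\alpha,p}^p - |f(0)|^p = (p^2/(2\pi))\int_{\mathbb D}|f|^{p-2}|f'|^2\, \Phi_\alpha(|z|)\, dA$ with $\Phi_\alpha(s) := \int_s^1 (1-r^2)^{\alpha-1}r^{-1}\, dr$ (a consequence of Hardy--Stein), a change of variables $\zeta = \rho z$ shows that the integrand for $f_\rho$ is a pointwise monotone increasing function of $\rho$ (because $\Phi_\alpha$ is decreasing and $|\zeta|/\rho$ decreases as $\rho$ increases); hence $\|f_\rho\|_{\alpha,p}^p \to \|f\|_{\alpha,p}^p$ by monotone convergence. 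Combined with the isometry $\|T_{w,\alpha/p}f_\rho\|_{\alpha,p}^p = \|f_\rho\|_{\alpha,p}^p$, the pointwise convergence $T_{w,\alpha/p}f_\rho \to T_{w,\alpha/p}f$, and Fatou's lemma, this yields $\|T_{w,\alpha/p}f\|_{\alpha,p}^p \leq \|f\|_{\alpha,p}^p$; the reverse inequality follows by applying the same argument to $g := T_{w,\alpha/p}f$ and using the involutive identity $T_{w,\alpha/p}(T_{w,\alpha/p}f) = f$. The main technical hurdle is the holomorphy step, which calls for uniform integrability estimates on the (holomorphically continued) integrand of \eqref{eq:Apa} for $\alpha$ in compact subsets of $\{\mre\alpha > 0\}$, needed to legitimize the interchange of integration in the Fubini/Morera argument.
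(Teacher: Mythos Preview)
Your outline matches the paper's proof almost step for step: establish the identity for $\alpha>1$ by the change-of-variables computation in \eqref{eq:bergman}, extend to $0<\alpha<1$ by analytic continuation in $\alpha$ applied to the dilations $f_\varrho$ (which are analytic past $\overline{\mathbb D}$), then pass to general $f$ via Fatou's lemma together with the involutive identity $T_{w,\alpha/p}\circ T_{w,\alpha/p}=\mathrm{id}$.

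The one substantive difference is how you justify $\|f_\rho\|_{\alpha,p}\uparrow\|f\|_{\alpha,p}$. The paper records this as Lemma~\ref{lem:rho1} and proves it using Hardy's convexity theorem (to get that $r\frac{d}{dr}M_p^p(r,f)$ is increasing). Your route---substituting $\zeta=\rho z$ in the Littlewood--Paley integral \eqref{eq:littlewoodpaley} and observing that $\omega_\alpha(|\zeta|^2/\rho^2)$ increases in $\rho$ since $\omega_\alpha$ is decreasing---is correct and more elementary; it avoids the appeal to Hardy's convexity entirely and yields both monotonicity and the limit in one stroke via monotone convergence. On the other hand, the paper's Lemma~\ref{lem:rho1} is stated at the level of \eqref{eq:Apa} and is reused later in the proof of Theorem~\ref{thm:HL}, so packaging it separately has organizational value. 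Your caveat about the holomorphy step is well placed; the paper also treats this as routine once $f_\varrho$ is analytic in $\varrho^{-1}\mathbb D$, since all integrands are then bounded uniformly on compact subsets of $\{\mre\alpha>0\}$.
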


Let us paraphrase Theorem~\ref{thm:mobinv} as saying that the class $A^p_\alpha$ enjoys isometric conformal invariance of index $\kappa=\alpha/p$. We will see that Theorem~\ref{thm:mobinv} follows by a suitable analytic continuation of the identity from the classical range $1\leq \alpha<\infty$ to the Dirichlet range $0<\alpha<1$. Theorem~\ref{thm:mobinv} yields the following sharp pointwise estimate for the elements of $A^p_\alpha$. 
\begin{corollary}\label{cor:pest} 
	Fix $0<\alpha<\infty$ and $0<p<\infty$. The estimate
	\[|f(w)|^p (1-|w|^2)^{\alpha} \leq \|f\|_{\alpha,p}^p,\]
	holds for every $f$ in $A^p_\alpha$. Equality is attained in this bound if and only if $f(z) = C\left(1-\overline{w}z\right)^{-2\alpha/p}$ for a constant $C$ and a point $w$ in $\mathbb{D}$. 
\end{corollary}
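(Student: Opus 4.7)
The plan is to reduce to the trivial bound at the origin via the conformal invariance of Theorem~\ref{thm:mobinv}. Set $g \coloneq T_{w,\alpha/p}f$ for the given point $w \in \mathbb{D}$. Evaluating at $z=0$ in the definition of $T_{w,\alpha/p}$ gives
\[
g(0) = f(w)\,(1-|w|^2)^{\alpha/p},
\]
so that $|g(0)|^p = |f(w)|^p(1-|w|^2)^{\alpha}$. On the other hand, since $r\mapsto M_p^p(r,g)$ is increasing, the integral term in \eqref{eq:Apa} (applied to $g$) is nonnegative, whence $\|g\|_{\alpha,p}^p \geq |g(0)|^p$. Combining this with the isometry $\|g\|_{\alpha,p} = \|f\|_{\alpha,p}$ from Theorem~\ref{thm:mobinv} yields the claimed pointwise bound.

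For the equality statement, I would argue that $|f(w)|^p(1-|w|^2)^{\alpha} = \|f\|_{\alpha,p}^p$ forces
\[
\int_0^1 \left(\frac{d}{dr}M_p^p(r,g)\right)(1-r^2)^{\alpha-1}\,dr = 0.
\]
Since the weight $(1-r^2)^{\alpha-1}$ is strictly positive on $(0,1)$ and the derivative is nonnegative, $r\mapsto M_p^p(r,g)$ must be constant on $(0,1)$. The Hardy--Stein identity \eqref{eq:hardystein} (equivalently, strict subharmonicity of $|g|^p$ wherever $g$ is nonconstant and nonvanishing, together with the maximum principle) then forces $g$ to be a constant~$C$.

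It remains to invert the relation $T_{w,\alpha/p} f = C$ to recover the extremal form of $f$. Writing $u = (w-z)/(1-\overline{w}z)$, the involutive nature of the disc automorphism gives the standard identity
\[
1-\overline{w}z = \frac{1-|w|^2}{1-\overline{w}u},
\]
and substituting this into $f(u) = C\,(1-\overline{w}z)^{2\alpha/p}(1-|w|^2)^{-\alpha/p}$ produces $f(u) = C'(1-\overline{w}u)^{-2\alpha/p}$ after absorbing the constant factor $(1-|w|^2)^{\alpha/p}$ into $C'$. Conversely, any such function is, up to a constant, of the form $T_{w,\alpha/p}^{-1}(\text{constant})$, and a direct check (or reapplication of Theorem~\ref{thm:mobinv}) shows that equality indeed holds.

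The main obstacle is the rigidity step: showing that vanishing of the integral forces $g$ to be constant. This is essentially automatic from the strict monotonicity of $M_p^p(r,g)$ for nonconstant $g$, but one should cite the Hardy--Stein identity to make the argument airtight, since the statement is meant to hold for all $0<\alpha<\infty$ and $0<p<\infty$, including the Dirichlet range where $A_\alpha^p$ is not a Bergman space.
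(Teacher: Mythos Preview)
Your argument is correct and is precisely the approach the paper takes: reduce to the case $w=0$ via the isometric conformal invariance of Theorem~\ref{thm:mobinv}, and then read off both the inequality and the equality characterization from \eqref{eq:Apa} together with the Hardy--Stein identity~\eqref{eq:hardystein}. You have simply written out in full the steps (including the involution $T_{w,\alpha/p}T_{w,\alpha/p}f=f$ for the inversion) that the paper leaves implicit.
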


Corollary~\ref{cor:pest} follows at once from Theorem~\ref{thm:mobinv} provided we can establish the special case $w=0$. However, this special case is immediate from \eqref{eq:Apa} and the Hardy--Stein identity \eqref{eq:hardystein}.

The main result of the present paper concerns the relationship between the quantities \eqref{eq:Apa} for different classes $A^p_\alpha$ that enjoy conformal invariance of the same index.
\begin{theorem}\label{thm:HL} 
	If $0<\alpha<\beta<\infty$ and $0<p<q<\infty$ satisfy $\alpha/p=\beta/q$, then 
	\begin{equation}\label{ineq:contractive_main} 
		\|f\|_{\beta,q} \leq \|f\|_{\alpha,p} 
	\end{equation}
	holds for every $f$ in $A^p_\alpha$. Equality is attained in this bound if and only if $f(z) = C\left(1-\overline{w}z\right)^{-2\alpha/p}$ for a constant $C$ and a point $w$ in $\mathbb{D}$. 
\end{theorem}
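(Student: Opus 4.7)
My plan combines Theorem~\ref{thm:mobinv}, a Hardy--Stein representation of the norm $\|\cdot\|_{\alpha,p}$ valid throughout $\alpha>0$, and the classical-range case $1\le\alpha<\beta$ already settled by the second-named author.

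First, I would verify that equality holds for the claimed extremizers. Writing $\kappa=\alpha/p=\beta/q$, Theorem~\ref{thm:mobinv} implies that $T_{w,\kappa}$ preserves both $\|\cdot\|_{\alpha,p}$ and $\|\cdot\|_{\beta,q}$, and the function $f(z)=C(1-\overline{w}z)^{-2\alpha/p}$ is a constant multiple of $T_{w,\kappa}$ applied to the constant function $1$. Since constants $g\equiv C$ have $\|g\|_{\alpha,p}=\|g\|_{\beta,q}=|C|$, equality in \eqref{ineq:contractive_main} follows for such $f$.

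Next, I would derive the representation
\[
\|f\|_{\alpha,p}^{p} = |f(0)|^{p} + \frac{p^{2}}{2}\int_{\mathbb{D}} |f(z)|^{p-2}|f'(z)|^{2}\, K_{\alpha}(|z|)\, dm(z),\qquad K_{\alpha}(t)\coloneq\int_{t}^{1}\frac{(1-r^{2})^{\alpha-1}}{r}\,dr,
\]
by inserting the Hardy--Stein formula for $(d/dr)M_{p}^{p}(r,f)$ into \eqref{eq:Apa} and exchanging the order of integration. Unlike \eqref{eq:bergman}, the kernel $K_{\alpha}$ is finite and nonnegative for every $\alpha>0$, so this formula serves as the Dirichlet-range substitute for the Bergman identity, and reduces to \eqref{eq:bergman} after a second integration by parts when $\alpha>1$.

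With this representation at hand, I would transport the second-named author's argument from the Bergman range into the Dirichlet range. Its two essential ingredients are (a) conformal invariance at index $\kappa$, which is now available throughout $0<\alpha<\infty$ by Theorem~\ref{thm:mobinv}, and (b) comparisons between integrals of the form $\int|f|^{r-2}|f'|^{2}K(|z|)\,dm(z)$; once \eqref{eq:bergman} is replaced by the representation above, these comparisons carry over. Tracking when each inequality is saturated then forces $f$ to be a $T_{w,\kappa}$-image of a constant, pinning down the extremal family.

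The main obstacle is that \eqref{eq:bergman}, on which the classical-range proof rests, is unavailable for $\alpha<1$ because its integrand fails to be locally integrable near $|z|=1$. The displayed Hardy--Stein representation is the natural substitute, but checking that every Bergman-range manipulation translates faithfully through it---i.e., that identities derived for $\alpha>1$ by moving derivatives between $|f|^{p}$ and the weight continue to make sense for $0<\alpha<1$---is the central technical difficulty, and is precisely where the analytic-continuation philosophy announced in the abstract enters: quantities defined on $\alpha>1$ by absolutely convergent Bergman integrals are matched, across the threshold $\alpha=1$, with the $\alpha<1$ expressions furnished by the Hardy--Stein representation.
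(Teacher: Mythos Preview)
Your proposal has a genuine gap: it mischaracterizes the proof in \cite{Kulikov2022} and therefore lacks the key idea that actually drives the argument. Kulikov's proof does \emph{not} proceed by comparing Littlewood--Paley integrals of the form $\int |f|^{r-2}|f'|^{2}K(|z|)\,dm$. Instead, it works with the hyperbolic distribution function
\[
\mu(t)=\mh\bigl(\{z:\ |f(z)|^{\sigma}(1-|z|^{2})>t\}\bigr),\qquad \sigma=p/\alpha=q/\beta,
\]
and the auxiliary function $g(t)=t(\mu(t)+1)$. The central geometric input (Lemma~\ref{lem:kulikovg}) is that $g$ is non-increasing on $(0,t_{0})$; this is where log-subharmonicity of $|f|$ is used, and nothing of the sort appears in your plan. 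The norm is then rewritten as $\Phi(\alpha,\sigma,f)=t_{0}^{\alpha}-\alpha\int_{0}^{t_{0}}g'(t)t^{\alpha-1}\,dt$, and the contractive inequality reduces to the elementary monotonicity of $x\mapsto(1+cx)^{1/x}$ (Lemma~\ref{lem:contr_bd}). Without $\mu$, $g$, and $\Phi$, you have no mechanism for comparing $\|f\|_{\alpha,p}$ with $\|f\|_{\beta,q}$: the Littlewood--Paley expressions involve different powers $|f|^{p-2}$ and $|f|^{q-2}$ and different kernels $K_{\alpha}$, $K_{\beta}$, and there is no direct pointwise or integral comparison between them.

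Your Hardy--Stein representation is correct (it is \eqref{eq:littlewoodpaley} in the paper), but it is used there only for the Besov comparison, not for Theorem~\ref{thm:HL}. The analytic continuation that the abstract advertises is not a vague ``matching across $\alpha=1$'' of Littlewood--Paley integrals; it is the specific extension of the identity $\Phi(\alpha,\sigma,f)=\|f\|_{\alpha,\sigma\alpha}^{\sigma\alpha}$ from $\alpha>1$ (where it follows from the layer-cake formula and \eqref{eq:bergman}) to $\alpha>0$. This requires proving that both sides are analytic in $\alpha$ in the right half-plane for functions analytic in the closed disc with no zeros on the circle (Lemma~\ref{lem:alternative_norm}), and then a separate approximation via $f_{\varrho}$ (Lemma~\ref{lem:rho1}) to reach general $f$. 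Your last paragraph gestures at analytic continuation but does not identify what is being continued or why it is analytic, and the equality-case analysis in the paper also runs through $\Phi$ and Corollary~\ref{cor:pest}, not through conformal invariance alone.
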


Theorem~\ref{thm:HL} is a contribution to a long line of research that originated in the work of Hardy and Littlewood~\cite{HL1932}. What is important from our point of view is that the inequality is \emph{contractive}, i.e. that the constant in the inequality is $1$. To the best of our knowledge, the first contractive Hardy--Litlewood inequality is due to Carleman (see Vukoti\'c's exposition in \cite{Vukotic2003}) and corresponds to the case $\alpha=1$ and $\beta=2$ in Theorem~\ref{thm:HL}. The fact that Carleman's inequality is contractive is crucial for its application by Helson~\cite{Helson2006} to multiplicative Hankel matrices. More recently, the second-named author~\cite{Kulikov2022} established Theorem~\ref{thm:HL} for $1 \leq \alpha < \infty$, and Llinares~\cite{Llinares2024} did the same for the pair $\beta=1$ and $p=2$ (and all $0<\alpha<1$). This resolved several conjectures from~\cite{BOSZ2018}.

Our main motivation for studying $A^p_\alpha$ in the range $0<\alpha<1$ and establishing Theorem~\ref{thm:HL} was to place the results of \cite{Kulikov2022} and \cite{Llinares2024} in a common framework. To this end, we will rely crucially on the techniques developed in \cite{Kulikov2022} and use also here, though in a less straightforward way than in the proof of Theorem~\ref{thm:mobinv}, analytic continuation to arrive at the extended range for the parameters $\alpha$ and $\beta$. 

Curiously, Theorem~\ref{thm:HL} allows us to obtain the following asymptotic strengthening of Corollary~\ref{cor:pest}. 
\begin{corollary}\label{cor:uniformzero} 
	Fix $0 < \alpha < \infty$ and $0 < p < \infty$. If $f$ is in $A^p_\alpha$, then 
	\begin{equation}\label{eq:bound_lim_zero} 
		\lim_{|w|\to 1^-} |f(w)|^p(1-|w|^2)^\alpha = 0. 
	\end{equation}
\end{corollary}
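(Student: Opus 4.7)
The plan is to use Theorem~\ref{thm:HL} to embed $f$ into a classical weighted Bergman space and then to run a standard dilation approximation argument. Fix any $\beta > \max(1,\alpha)$ and set $q = p\beta/\alpha$; Theorem~\ref{thm:HL} then gives $\|f\|_{\beta,q} \leq \|f\|_{\alpha,p} < \infty$, so $f \in A^q_\beta$, and identity \eqref{eq:bergman} realizes $A^q_\beta$ as an ordinary weighted Bergman space. Since $\beta = \alpha q/p$, we have the pointwise identity
\[|f(w)|^q(1-|w|^2)^\beta = \bigl(|f(w)|^p(1-|w|^2)^\alpha\bigr)^{q/p},\]
which reduces \eqref{eq:bound_lim_zero} to showing $|f(w)|^q(1-|w|^2)^\beta \to 0$ as $|w|\to 1^-$.

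To prove the reduced statement, I would approximate $f$ by its dilations $f_\rho(z) := f(\rho z)$. Each $f_\rho$ is analytic across $\overline{\mathbb{D}}$ and hence bounded there, so trivially $|f_\rho(w)|^q(1-|w|^2)^\beta \to 0$ as $|w|\to 1^-$. Using \eqref{eq:bergman}, the convergence $\|f - f_\rho\|_{\beta,q} \to 0$ reduces to $L^q$-convergence of $f_\rho$ to $f$ against the finite measure $(1-|z|^2)^{\beta-2}\,dm(z)$, which follows from pointwise convergence together with the norm convergence $\|f_\rho\|_{\beta,q} \to \|f\|_{\beta,q}$ (a straightforward change of variables and monotone convergence) via the Brezis--Lieb lemma. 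Applying Corollary~\ref{cor:pest} in its $(\beta,q)$-instance to $f - f_\rho$ then yields
\[|f(w) - f_\rho(w)|^q(1-|w|^2)^\beta \leq \|f - f_\rho\|_{\beta,q}^q,\]
and a quasi-triangle inequality in $q$ (with constant depending only on $q$) combined with the trivial vanishing of the $f_\rho$-term gives $\limsup_{|w|\to 1^-}|f(w)|^q(1-|w|^2)^\beta \leq C_q\|f - f_\rho\|_{\beta,q}^q$; letting $\rho \to 1^-$ finishes the proof.

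The main technical point I anticipate is the norm convergence $\|f - f_\rho\|_{\beta,q} \to 0$ in $A^q_\beta$, which is routine for $q \geq 1$ but requires a little care in the quasi-Banach range $0 < q < 1$. The role of Theorem~\ref{thm:HL} in this strategy is precisely to bypass the more delicate question of dilation density in $A^p_\alpha$ itself when $0 < \alpha < 1$, by transferring the problem to a weighted Bergman space in the classical range, where such density is well established.
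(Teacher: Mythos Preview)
Your approach is correct and is essentially the paper's own argument: embed $A^p_\alpha$ into a classical space via the contractive inequality and run a standard approximation there. The paper lands in $H^{p/\alpha}=A^{p/\alpha}_1$ and cites polynomial density from \cite{Kulikov2022}; you land in a weighted Bergman space $A^q_\beta$ with $\beta>1$ and write out the dilation argument explicitly---these are minor variants of the same idea.

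One point of logical hygiene: in the paper, the \emph{equality} clause of Theorem~\ref{thm:HL} is proved \emph{using} Corollary~\ref{cor:uniformzero}, so invoking Theorem~\ref{thm:HL} here risks circularity. The paper avoids this by first isolating the bare inequality as Lemma~\ref{lem:prototype} (established without any appeal to Corollary~\ref{cor:uniformzero}), proving Corollary~\ref{cor:uniformzero} from that lemma alone, and only then returning to finish the equality analysis. Since your argument uses only the inequality $\|f\|_{\beta,q}\le\|f\|_{\alpha,p}$, it is sound; just cite Lemma~\ref{lem:prototype} rather than Theorem~\ref{thm:HL}. Your closing remark about ``bypassing dilation density in $A^p_\alpha$ itself when $0<\alpha<1$'' is exactly right and is the whole point of the reduction.
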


Theorem~\ref{thm:HL} yields another companion to the pointwise estimate from Corollary~\ref{cor:pest}. Indeed, we obtain a dichotomy for the classical majorant function (see~\cites{Boas2000,Bohr1914}) which for $f(z) = \sum_{j\geq0} a_j z^j$ is defined as
\[Mf(z) \coloneq \sum_{j=0}^\infty |a_j| z^j.\]
Note that, trivially, $|f(z)| \leq Mf(|z|)$, which justifies the name. 
\begin{corollary}\label{cor:majorant} 
	Fix $0 < \alpha < \infty$. 
	\begin{enumerate}
		\item[(a)] If $0 < p \leq 2$, then the estimate
		\[\left(Mf(|w|)\right)^p (1-|w|^2)^{\alpha} \leq \|f\|_{\alpha,p}^p,\]
		holds for every $f$ in $A^p_\alpha$ and every $w$ in $\mathbb{D}$. 
		\item[(b)] If $2<p<\infty$ and $w\neq0$, then there is a function $f$ in $A^p_\alpha$ such that
		\[\left(Mf(|w|)\right)^p (1-|w|^2)^{\alpha} > \|f\|_{\alpha,p}^p.\]
	\end{enumerate}
\end{corollary}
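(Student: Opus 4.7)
The plan is to reduce to the $p=2$ case. Setting $q=2$ and $\beta=2\alpha/p\geq\alpha$, Theorem~\ref{thm:HL} (trivially if $p=2$) yields $\|f\|_{\beta,2}\leq\|f\|_{\alpha,p}$. Parseval's identity gives $M_2^2(r,f)=\sum_j|a_j|^2r^{2j}=M_2^2(r,Mf)$, so $\|Mf\|_{\beta,2}=\|f\|_{\beta,2}$. Applying Corollary~\ref{cor:pest} to $Mf\in A^2_\beta$ at the positive real point $|w|$ then gives $(Mf(|w|))^2(1-|w|^2)^\beta\leq\|Mf\|_{\beta,2}^2\leq\|f\|_{\alpha,p}^2$; raising to the $p/2$-th power and using $\beta p/2=\alpha$ yields the claim.

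\textbf{Part (b).} The starting observation is that the extremizer $F(z)=(1-\overline{w}z)^{-2\alpha/p}$ of Corollary~\ref{cor:pest} already gives equality in (a): its Taylor coefficients $F_j=b_j\overline{w}^j$ with $b_j>0$ give $MF(z)=(1-|w|z)^{-2\alpha/p}$, so $MF(|w|)=(1-|w|^2)^{-2\alpha/p}=|F(w)|$, and the equality case of Corollary~\ref{cor:pest} at the point $w$ yields $(MF(|w|))^p(1-|w|^2)^\alpha=\|F\|_{\alpha,p}^p$. The idea is now to perturb $F$ so as to leave $Mf$ (and hence the left-hand side) unchanged while strictly decreasing $\|f\|_{\alpha,p}$. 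Concretely, fix any $m\geq1$ and set $f(z)=F(z)-2F_mz^m$, flipping the sign of a single Taylor coefficient; since $|f_j|=|F_j|$ for every $j$, we have $Mf=MF$ and therefore $(Mf(|w|))^p(1-|w|^2)^\alpha=\|F\|_{\alpha,p}^p$ as well.

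The main obstacle is proving $\|f\|_{\alpha,p}<\|F\|_{\alpha,p}$ for every $p>2$ and $\alpha>0$. The key input is the classical inequality $\|g\|_{L^p(\mathbb{T})}\leq\|Mg\|_{L^p(\mathbb{T})}$ for $p\geq 2$, with equality if and only if $g(z)=\omega_1 Mg(\omega_2 z)$ for some unimodular constants. Since $F(z)=MF\bigl((\overline{w}/|w|)z\bigr)$ is precisely such a rotation of its majorant while the sign flip destroys this coherent phase structure ($F$ has infinitely many nonzero Taylor coefficients), the dilations $f_\rho(z)=f(\rho z)$ satisfy $M_p^p(\rho,f)<M_p^p(\rho,F)$ strictly for every $\rho\in(0,1]$. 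For $p=2k$ an even integer, Parseval applied to $g^k$ gives $M_p^p(r,g)=\sum_n r^{2n}|(g^k)_n|^2$, so $\Delta(r):=M_p^p(r,F)-M_p^p(r,f)$ is a power series in $r^2$ with nonnegative coefficients; in particular $\Delta$ is nondecreasing, and
\[\|F\|_{\alpha,p}^p-\|f\|_{\alpha,p}^p=\int_0^1\Delta'(r)(1-r^2)^{\alpha-1}\,dr>0\]
for every $\alpha>0$. For general $p>2$, the same conclusion follows from the strict pointwise comparison by an integration by parts in $r$ (immediate for $\alpha\geq1$; for $0<\alpha<1$ one exploits that $f-F$ is a single monomial, so both norms are finite and the boundary terms in the limiting integration by parts are controlled).
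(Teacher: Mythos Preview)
Your Part~(a) is correct and essentially the paper's argument: the paper writes out Cauchy--Schwarz against the binomial weights explicitly, but that is exactly Corollary~\ref{cor:pest} applied to $Mf$ in the Hilbert space $A^2_{2\alpha/p}$, followed by Theorem~\ref{thm:HL}.

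Part~(b) contains a genuine gap. The ``classical inequality'' $\|g\|_{L^p(\mathbb{T})}\leq\|Mg\|_{L^p(\mathbb{T})}$ for $p\geq 2$ that you invoke is \emph{false} when $p>2$ is not an even integer: this is the Hardy--Littlewood majorant problem, and counterexamples (even with $g$ obtained from $Mg$ by sign flips) go back to Boas and Bachelis. Your argument for $p=2k$ is fine, because there the coefficientwise comparison $|(f^k)_n|\leq((Mf)^k)_n$ is a genuine triangle-inequality fact; but the extension ``for general $p>2$, the same conclusion follows from the strict pointwise comparison'' has no foundation---the pointwise comparison $M_p^p(r,f)\leq M_p^p(r,F)$ need not hold. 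The integration-by-parts sketch for $0<\alpha<1$ is therefore moot (and is in any case too vague, since the boundary term blows up).

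The paper's route is simpler and covers all $p>2$ at once: it reuses the $A^2_{2\alpha/p}$ idea from Part~(a), now in the reverse direction. With your own $f$ (any single sign flip of $F$) one has
\[
\|f\|_{\alpha,p}<\|f\|_{2\alpha/p,2}=\|Mf\|_{2\alpha/p,2}=\|MF\|_{2\alpha/p,2}=\|F\|_{2\alpha/p,2}=\|F\|_{\alpha,p},
\]
where the first inequality is Theorem~\ref{thm:HL} (now $2\alpha/p<\alpha$), strict because $f$ is not of the extremal form, the middle equalities use that $A^2_{2\alpha/p}$ only sees $|a_j|$, and the last equality is the equality case of Theorem~\ref{thm:HL} for $F$. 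The paper flips the constant coefficient, but your choice works just as well with this argument.
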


As indicated above, definition \eqref{eq:Apa} purports to extend the Bergman range beyond the (perhaps) natural Hardy endpoint to the Dirichlet range. Another way to achieve the same goal is via Besov spaces. For simplicity we restrict our attention to the case $p+\alpha>1$ and let $B^p_\alpha$ denote the space of analytic functions in $\mathbb{D}$ such that $\|f\|_{B^p_\alpha}<\infty$, where
\[\|f\|_{B^p_\alpha}^p = |f(0)|^p + (\alpha+p-1)\int_{\mathbb{D}} |f'(z)|^p (1-|z|^2)^{\alpha+p-2}\,dm(z).\]
The Besov spaces $B^p_\alpha$ are well known to enjoy conformal invariance of index $\alpha/p$ (see e.g. \cite{AM2021}*{Example 3}). In the Bergman range $1<\alpha<\infty$, we have $A^p_\alpha = B^p_\alpha$ as sets and with equivalence of norms (see \cite{PR2021}*{Theorem~5} for a more general result). For $0 < \alpha \leq 1$, the situation is as follows.
\begin{theorem}\label{thm:besovcomp} 
	Suppose that $0<\alpha \leq 1$. If 
	\begin{enumerate}
		\item[(a)] $0<p \leq 2$, then $B^p_\alpha \subset A^p_\alpha$, 
		\item[(b)] $2 \leq p < \infty$, then $A^p_\alpha \subset B^p_\alpha$. 
	\end{enumerate}
	Moreover, $A^p_\alpha \neq B^p_\alpha$ unless $p=2$. 
\end{theorem}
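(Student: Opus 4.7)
The plan is to convert both norms into weighted integrals with comparable structure via the Hardy--Stein identity and then invoke classical Littlewood--Paley type inequalities. Starting from $\Delta(|f|^p)=p^2|f|^{p-2}|f'|^2$ and the polar form of the Laplacian, a Fubini computation on \eqref{eq:Apa} yields
\[\|f\|_{\alpha,p}^p=|f(0)|^p+\frac{p^2}{2}\int_{\mathbb D}|f(z)|^{p-2}|f'(z)|^2\,K_\alpha(|z|)\,dm(z),\qquad K_\alpha(s):=\int_s^1 r^{-1}(1-r^2)^{\alpha-1}\,dr.\]
Since $K_\alpha(s)\asymp(1-s^2)^\alpha$ uniformly on $[1/2,1)$ and the logarithmic singularity at $s=0$ is integrable against $|f|^{p-2}|f'|^2$ (which is bounded near regular points and handled by factoring any zero of $f$ at the origin), proving (a) and (b) reduces to one-sided comparisons between
\[I_A(f):=\int_{\mathbb D}|f|^{p-2}|f'|^2(1-|z|^2)^\alpha\,dm\qquad\text{and}\qquad I_B(f):=\int_{\mathbb D}|f'|^p(1-|z|^2)^{\alpha+p-2}\,dm.\]

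Next I interpret these two integrals in the Calder\'on area-integral / Littlewood--Paley framework. The Hardy--Stein form identifies $I_A(f)$ with (a constant times) $\|S_\alpha f\|_{L^p(\mathbb T)}^p$, where $S_\alpha f(\theta)^2=\int_{\Gamma(\theta)}|f'(z)|^2(1-|z|^2)^{\alpha-1}\,dm(z)$ is the non-tangential area function; analogously, $I_B(f)\asymp\|g_\alpha f\|_{L^p(\mathbb T)}^p$ for the radial $g$-function $g_\alpha f(\theta)^p=\int_0^1|f'(re^{i\theta})|^p(1-r)^{\alpha+p-2}\,dr$. The classical $S$-versus-$g$ inequalities then give $\|S_\alpha f\|_p\lesssim\|g_\alpha f\|_p$ for $p\le 2$ and the reverse for $p\ge 2$, yielding parts~(a) and~(b) respectively. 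The input needed is the identification $I_A(f)\asymp\|S_\alpha f\|_p^p$ in the Dirichlet range $0<\alpha<1$; this follows by a tent-space change-of-variables argument, using that the weight $(1-|z|^2)^\alpha$ has the correct Carleson behaviour.

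For the separation claim $A^p_\alpha\ne B^p_\alpha$ when $p\ne 2$, I test on Hadamard lacunary series $f(z)=\sum_k a_k z^{2^k}$. From the $S$-function description one obtains $\|f\|_{\alpha,p}^p\asymp\big(\sum_k|a_k|^2 2^{k(1-\alpha)}\big)^{p/2}$, an $\ell^2$-type quantity reflecting the square-function structure, while a direct Parseval / dyadic-annulus computation shows $\|f\|_{B^p_\alpha}^p\asymp\sum_k|a_k|^p 2^{k(p-\alpha)}$, an $\ell^p$-type quantity. These are comparable on every sequence $(a_k)$ only when $p=2$; choosing $(a_k)$ in $\ell^2\setminus\ell^p$ (for $p<2$) or $\ell^p\setminus\ell^2$ (for $p>2$), suitably normalised, produces explicit separating examples.

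The main obstacle I anticipate is the second paragraph, specifically extending the $S$-function characterisation of $\|\cdot\|_{\alpha,p}$ from the classical range $1\le\alpha$ to the Dirichlet range $0<\alpha<1$; the Hardy--Stein identity derived in the first paragraph is precisely the formula needed to adapt the standard tent-space argument, so this should be a routine but careful verification. The $S$-vs-$g$ inequality and the lacunary separation are then well-established techniques.
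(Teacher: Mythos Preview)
Your reduction via the Hardy--Stein identity to comparing
\[
I_A(f)=\int_{\mathbb D}|f|^{p-2}|f'|^2(1-|z|^2)^\alpha\,dm
\qquad\text{and}\qquad
I_B(f)=\int_{\mathbb D}|f'|^p(1-|z|^2)^{\alpha+p-2}\,dm
\]
is correct and matches the paper's Lemma~\ref{lem:LP}. The gap is the step you flag yourself and then dismiss as routine: the identification $I_A(f)\asymp\|S_\alpha f\|_{L^p(\mathbb T)}^p$ for $0<\alpha<1$. A tent-space Fubini argument gives
\[
I_A(f)\asymp\int_{\mathbb T}\int_{\Gamma(\theta)}|f(z)|^{p-2}|f'(z)|^2(1-|z|^2)^{\alpha-1}\,dm(z)\,d\theta,
\]
which still carries the factor $|f|^{p-2}$ and is \emph{not} $\|S_\alpha f\|_p^p$. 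Removing that factor is the whole content of a Calder\'on-type area theorem; for $\alpha=1$ this is classical and highly non-trivial, and for $0<\alpha<1$ no such result is available. In fact a two-sided equivalence $I_A(f)\asymp\|S_\alpha f\|_p^p$ would immediately force $A^p_\alpha$ to be a linear space (since $S_\alpha(f+g)\le S_\alpha f+S_\alpha g$ pointwise), thereby resolving Problem~\ref{prob:linear}, which the paper leaves open. So this cannot be a ``routine but careful verification''. The subsequent ``classical $S$-versus-$g$'' step is also not standard as stated: your $g_\alpha$ has $|f'|^p$ inside the radial integral rather than $|f'|^2$, so the usual pointwise comparison between the Lusin area function and the Littlewood--Paley $g$-function does not apply directly.

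Your lacunary separation argument inherits the same gap: the formula $\|f\|_{\alpha,p}^p\asymp\bigl(\sum_k|a_k|^2 2^{k(1-\alpha)}\bigr)^{p/2}$ for lacunary $f$ is exactly what the unavailable $S$-function characterisation would give, and is not known for $p\neq 2$ in the Dirichlet range. (Incidentally, your Besov computation is off by a power: one gets $\|f\|_{B^p_\alpha}^p\asymp\sum_k|a_k|^p 2^{k(1-\alpha)}$, not $2^{k(p-\alpha)}$.) The paper avoids all of this: parts~(a) and~(b) are handled by direct arguments due to Dyakonov and Luecking that never pass through an area-function equivalence for $A^p_\alpha$, and the strict inclusions are produced not by lacunary series but by antiderivatives of the ``sine-type'' functions of Lemma~\ref{lem:sine}, whose controlled growth lets one compute $I_A$ and $I_B$ by hand.
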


Theorem~\ref{thm:besovcomp} is not new for $\alpha=1$: In this case, (a) goes back at least to Vinogradov~\cite{Vinogradov1995}, while (b) is a classical inequality due to Littlewood and Paley~\cite{LP1936}. The final assertion is also well known for $\alpha=1$ and can be established with the help of suitable lacunary series (see e.g.~\cite{BGP2004}*{p.~840}). The proof of (a) and (b) of Theorem~\ref{thm:besovcomp} rests on well known techniques for Besov spaces, more precisely on work by Luecking \cite{Luecking88} in the range $p>2$ and by Dyakonov \cite{Dyakonov98} in the range $1\leq p <2$. Our example functions showing that the inclusions are strict arise from analytic functions that, away from their zeros, grow like a negative power of the distance to the boundary. 

Aleman and Mas \cite{AM2021}*{Section~4} have determined the largest and smallest Banach spaces of analytic functions in $\mathbb{D}$ that enjoy conformal invariance of a fixed index $0<\kappa<\infty$. They prove that the smallest such space is $B^1_\kappa$ and that the largest is a Korenblum growth class. They also establish that the Besov spaces $B^p_{\kappa/p}$ can be obtained by (complex) interpolation between the largest and smallest spaces. As noted in \cite{AM2021}*{p.~4}, the Hardy spaces are missing from this interpolation chain. This is in line with Theorem~\ref{thm:besovcomp}.

The reader may have noticed that we consistently refer to the formula \eqref{eq:Apa} as a quantity and to $A^p_\alpha$ as a class. If $1 \leq \alpha < \infty$, then \eqref{eq:Apa} is a (quasi-)norm and $A^p_\alpha$ is a linear space. It would be interesting to know if this property extends to the Dirichlet range.
\begin{problem}\label{prob:linear} 
	Fix $0<\alpha<1$ and $p \neq 2$. Is $A^p_\alpha$ a linear space? 
\end{problem}

The case $p=2$ has to be excluded from Problem~\ref{prob:linear}, since $A_\alpha^2$ is easily verified to be a Hilbert space (see Theorem~\ref{thm:hilbertspace} below). Aleman and Mas~\cite{AM2021}*{Theorem~5} have in fact established that $A^2_\alpha$ is the \emph{unique} Hilbert space (up to equivalence of norms) that enjoys conformal invariance of index $\kappa=\alpha/2$. Our choice of norm is canonical in the sense that it is the only norm such that the maps $T_{w,\alpha/2}$ are isometries on $A^2_\alpha$. The Hilbert space structure of $A^2_\alpha$ plays a role in the proof of Corollary~\ref{cor:majorant}.

We will now add two more results which, in spite of the unsettled Problem~\ref{prob:linear}, reveal that $A_\alpha^p$ have some desirable properties in the range $0<\alpha<1$, complementing in a natural way well-known results for $\alpha\geq 1$. We consider first the shift operator $S$ which acts on analytic functions $f$ as
\[Sf(z)\coloneqq zf(z).\]
To state our result regarding $S$, we introduce the following terminology. We say that $T \colon A_\alpha^p\to A_{\alpha}^p$ is a strict contraction (respectively a strict expansion) on $A_{\alpha}^p$ if $\| Tf\|_{\alpha,p}<\|f\|_{\alpha,p}$ (respectively $\| Tf\|_{\alpha,p}>\|f\|_{\alpha,p}$), in either case on the proviso that $f\not\equiv 0$. We say that $T$ is norm attaining on $A_{\alpha}^p$ if there exists a function $f_0$ in $A_\alpha^p$ with $\|f_0\|_{\alpha,p}=1$ such that $\| Tf_0 \|_{\alpha,p} =\| T \|_{\alpha,p}$, where
\[\| T \|_{\alpha,p} \coloneqq \sup_{\substack{f\in A_\alpha^p \\ f \not \equiv 0}} \frac{\| Tf \|_{\alpha,p}}{\| f\|_{\alpha,p}}.\]
It is a trivial fact that $S$ is a strict contraction on $A_{\alpha}^p$ which fails to be norm attaining when $\alpha>1$, and also that $S$ is an isometry on $H^p=A_{1}^p$. In the range $0<\alpha<1$, we have the following.
\begin{theorem}\label{thm:exp} 
	Fix $0<\alpha<1$ and $0<p<\infty$. The shift operator $S$ is a strict expansion on $A_{\alpha}^p$ that is norm attaining with
	\[\|S\|_{\alpha,p}^p = 1+2(1-\alpha) \int_0^1 (1-r^p) (1-r^2)^{\alpha-2} \,rdr.\]
	Moreover, $\|S f_0\|_{\alpha,p}= \|S\|_{\alpha,p}$ if and only if $f_0 \equiv C$. 
\end{theorem}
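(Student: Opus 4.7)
The plan is to obtain a clean identity for $\|Sf\|_{\alpha,p}^p - \|f\|_{\alpha,p}^p$ from which all three parts of the statement follow. Writing $\mu(r) := \frac{d}{dr} M_p^p(r,f) \geq 0$ and using $(Sf)(0)=0$ together with $M_p^p(r,Sf) = r^p M_p^p(r,f)$, the product rule gives
\[
\|Sf\|_{\alpha,p}^p = \int_0^1 \left[p r^{p-1} M_p^p(r,f) + r^p \mu(r)\right](1-r^2)^{\alpha-1}\,dr.
\]
I would then integrate by parts on the $r^p \mu(r)$ term, taking the antiderivative to be $r^p - 1$ so that the boundary term at $r=1$ equals $-(1-r^p)(1-r^2)^{-1}\cdot (1-r^2)^\alpha M_p^p(r,f)$, which vanishes as $r \to 1^-$ since $f \in A_\alpha^p$ forces $(1-r^2)^\alpha M_p^p(r,f) \to 0$. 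The boundary term at $r=0$ contributes $|f(0)|^p$, and the $pr^{p-1}$ contributions coming from differentiation of $r^p-1$ cancel the first piece of the integrand. The result is the identity
\begin{equation*}
\|Sf\|_{\alpha,p}^p = \|f\|_{\alpha,p}^p + 2(1-\alpha)\int_0^1 M_p^p(r,f)\,(1-r^p)(1-r^2)^{\alpha-2}\,r\,dr.
\end{equation*}
Since the integrand on the right is strictly positive on $(0,1)$ whenever $f \not\equiv 0$, this establishes that $S$ is a strict expansion; specializing to $f \equiv C$ (where $M_p^p(r,f) = |C|^p$) produces the claimed value of $\|S\|_{\alpha,p}^p$ as a lower bound for the operator norm.

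For the matching upper bound, I would decompose $M_p^p(r,f) = |f(0)|^p + \int_0^r \mu(s)\,ds$ and apply Fubini to rewrite the identity above as
\[
\|Sf\|_{\alpha,p}^p - \|f\|_{\alpha,p}^p = 2(1-\alpha)\left[|f(0)|^p H + \int_0^1 \mu(s)\,G(s)\,ds\right],
\]
where $G(s) := \int_s^1 (1-r^p)(1-r^2)^{\alpha-2}\,r\,dr$ and $H := G(0)$. Combined with $\|f\|_{\alpha,p}^p = |f(0)|^p + \int_0^1 \mu(s)(1-s^2)^{\alpha-1}\,ds$, the target bound $\|Sf\|_{\alpha,p}^p \leq [1+2(1-\alpha)H]\,\|f\|_{\alpha,p}^p$ reduces to the pointwise inequality
\begin{equation*}
G(s) \leq H(1-s^2)^{\alpha-1}, \qquad 0 \leq s < 1.
\end{equation*}

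This inequality is the main obstacle, and my approach is to set $\phi(s) := G(s)(1-s^2)^{1-\alpha}$ and prove $\phi$ is strictly decreasing from $\phi(0) = H$. Using $G'(s) = -(1-s^p)(1-s^2)^{\alpha-2} s$, a direct calculation yields
\[
\phi'(s) = -\frac{s(1-s^p)}{1-s^2} - \frac{2(1-\alpha)\,s\,G(s)}{(1-s^2)^\alpha},
\]
which is strictly negative on $(0,1)$ because $1-s^p > 0$, $G(s) > 0$, and $1-\alpha > 0$. Hence $G(s) < H(1-s^2)^{\alpha-1}$ for $s \in (0,1)$, with equality only at $s=0$.

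Finally, for the extremal case, suppose $\|f_0\|_{\alpha,p} = 1$ and $\|Sf_0\|_{\alpha,p} = \|S\|_{\alpha,p}$. Tracing back through the preceding inequalities, this forces
\[
\int_0^1 \mu(s)\bigl[H(1-s^2)^{\alpha-1} - G(s)\bigr]\,ds = 0,
\]
and since the bracket is strictly positive on $(0,1)$ while $\mu \geq 0$, I conclude $\mu \equiv 0$ on $(0,1)$; by the Hardy--Stein identity this forces $f_0' \equiv 0$, so $f_0$ must be constant. The converse direction was already verified while deriving the lower bound.
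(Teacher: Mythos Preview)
Your proof is correct, and the core identity you derive,
\[
\|Sf\|_{\alpha,p}^p - \|f\|_{\alpha,p}^p = 2(1-\alpha)\int_0^1 M_p^p(r,f)\,(1-r^p)(1-r^2)^{\alpha-2}\,r\,dr,
\]
is exactly the formula \eqref{eq:Sfest} obtained in the paper via the same integration by parts. One small point: you assert that $f\in A_\alpha^p$ forces $(1-r^2)^\alpha M_p^p(r,f)\to 0$ without justification; the cleanest reason (and the one the paper uses) is that $\|f\|_{1,p}\le\|f\|_{\alpha,p}$ for $0<\alpha<1$, so $f\in H^p$ and $M_p^p(r,f)$ is bounded.

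Where you diverge from the paper is in the upper bound. The paper dispatches it in one line: since $M_p^p(r,f)\le \|f\|_{1,p}^p\le \|f\|_{\alpha,p}^p$ for every $r$, one may simply pull $\|f\|_{\alpha,p}^p$ out of the integral in the identity above, and equality throughout forces $M_p^p(r,f)$ to be constant in $r$, hence $f$ constant. Your route---decomposing $M_p^p(r,f)=|f(0)|^p+\int_0^r\mu$, applying Fubini, and proving the pointwise inequality $G(s)\le H(1-s^2)^{\alpha-1}$ via monotonicity of $\phi(s)=G(s)(1-s^2)^{1-\alpha}$---is correct but works harder than necessary. In effect your inequality $G(s)< H(1-s^2)^{\alpha-1}$ is a weighted restatement of the trivial bound $M_p^p(r,f)\le \|f\|_{\alpha,p}^p$ after one Fubini; the paper's argument skips that detour. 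On the other hand, your approach has the minor advantage that the strict inequality $G(s)<H(1-s^2)^{\alpha-1}$ on $(0,1)$ makes the extremal analysis self-contained, without needing both links in the chain $M_p^p(r,f)=\|f\|_{1,p}^p=\|f\|_{\alpha,p}^p$.
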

Since $\alpha \mapsto \|f\|_{\alpha,p}$ is increasing in $\alpha$, it follows that if $f$ is in $A^p_\alpha$ for $0<\alpha<1$, then $f$ is in $H^p$ and, in particular, that $f$ admits an inner-outer factorization. The proof of Theorem~\ref{thm:exp} can be elaborated to yield the following general result about division by inner functions. 
\begin{theorem}\label{thm:inner} 
	Suppose that $0<\alpha<1$ and $0<p<\infty$, and let $f$ be a nontrivial function in $A_{\alpha}^p$. If $I$ is a nontrivial inner function dividing $f$, then
	\[ \| f/I \|_{\alpha,p} < \| f\|_{\alpha,p}. \]
\end{theorem}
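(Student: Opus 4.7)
The plan is to recast $\|f\|_{\alpha,p}^p$ via an integration by parts that exhibits it as a manifestly monotone functional of the circle means $M_p^p(r,f)$, and then exploit the modular properties of inner functions on the disc and on its boundary. The key identity I aim to establish is
\begin{equation}\label{eq:IBPid}
\|f\|_{\alpha,p}^p = \|f\|_{H^p}^p + 2(1-\alpha)\int_0^1 r\bigl(\|f\|_{H^p}^p - M_p^p(r,f)\bigr)(1-r^2)^{\alpha-2}\, dr,
\end{equation}
valid for $0<\alpha<1$ and $f\in A_\alpha^p$. The derivation integrates by parts in \eqref{eq:Apa} with $u(r)=(1-r^2)^{\alpha-1}$ and $v(r)=M_p^p(r,f)-\|f\|_{H^p}^p$; differentiating $u$ produces the kernel $2r(1-\alpha)(1-r^2)^{\alpha-2}$ while the value of $uv$ at $r=0$ combines with the $|f(0)|^p$ in \eqref{eq:Apa} to give the first term on the right. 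I expect \eqref{eq:IBPid} to underlie the proof of Theorem~\ref{thm:exp} as well, matching the hint that the present result elaborates on that argument.

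The delicate point in establishing \eqref{eq:IBPid} is the behaviour of $uv$ at $r=1$. Because $\alpha<1$, the factor $(1-s^2)^{\alpha-1}$ is nondecreasing on $[r,1)$, so
\[
\bigl(\|f\|_{H^p}^p - M_p^p(r,f)\bigr)(1-r^2)^{\alpha-1} \leq \int_r^1 \tfrac{d}{ds} M_p^p(s,f)\,(1-s^2)^{\alpha-1}\, ds,
\]
and the right-hand side vanishes as $r\to 1^-$ because the defining integral of $\|f\|_{\alpha,p}$ converges. The same estimate applied on $[0,1)$ yields $\|f\|_{H^p}^p\leq |f(0)|^p+\|f\|_{\alpha,p}^p$, so the quantity $\|f\|_{H^p}$ appearing in \eqref{eq:IBPid} is indeed finite for every $f\in A_\alpha^p$ and $\eqref{eq:IBPid}$ makes sense. (Either the identity is derived first on $[0,R]$ and then $R\to 1^-$, or one shows that both sides are well-defined and coincide directly.)

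With \eqref{eq:IBPid} in hand, write $g=f/I$; standard inner-outer theory ensures $g\in H^p$. Two elementary observations then do the work. First, $|I|\leq 1$ pointwise in $\mathbb D$ gives $M_p^p(r,f)\leq M_p^p(r,g)$ for every $0<r<1$. Second, $|I|=1$ almost everywhere on $\partial\mathbb D$ gives $\|f\|_{H^p}=\|g\|_{H^p}$. Applying \eqref{eq:IBPid} to both $f$ and $g$ and subtracting therefore yields
\[
\|f\|_{\alpha,p}^p - \|g\|_{\alpha,p}^p = 2(1-\alpha)\int_0^1 r\bigl(M_p^p(r,g) - M_p^p(r,f)\bigr)(1-r^2)^{\alpha-2}\, dr \geq 0,
\]
where the inequality uses $1-\alpha>0$ and the positivity of the weight $r(1-r^2)^{\alpha-2}$ on $(0,1)$. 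In particular $\|g\|_{\alpha,p}<\infty$. If equality held, the nonnegative integrand $|g(re^{i\theta})|^p\bigl(1-|I(re^{i\theta})|^p\bigr)$ would have to vanish almost everywhere on $\mathbb D$; since the nontrivial $g$ has only isolated zeros, this would force $|I|\equiv 1$ on $\mathbb D$, contradicting the assumption that $I$ is nontrivial.

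The only genuine obstacle is the rigorous handling of the $r=1$ endpoint in the integration by parts leading to \eqref{eq:IBPid}; once the monotonicity bound above is in place, the comparison of $f$ with $g$ is immediate and uses the hypothesis $0<\alpha<1$ in a single clean way, namely through the sign of the factor $1-\alpha$ that flips the direction of inclusion as one crosses from the Bergman to the Dirichlet range.
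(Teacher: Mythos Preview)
Your approach is essentially the paper's: both integrate by parts against $(1-r^2)^{\alpha-1}$ to reach
\[
\|f\|_{\alpha,p}^p - \|g\|_{\alpha,p}^p = 2(1-\alpha)\int_0^1 r\bigl(M_p^p(r,g) - M_p^p(r,f)\bigr)(1-r^2)^{\alpha-2}\, dr,
\]
and both use $|I|\leq 1$ in $\mathbb{D}$ together with $|I|=1$ on $\partial\mathbb{D}$. The paper subtracts first and then integrates by parts, justifying the vanishing of the boundary term $(M_p^p(r,g)-M_p^p(r,f))(1-r^2)^{\alpha-1}$ via a dyadic decomposition that yields $M_p^p(1,f)-M_p^p(r,f)=o((1-r^2)^{1-\alpha})$. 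Your monotonicity bound
\[
(\|f\|_{H^p}^p - M_p^p(r,f))(1-r^2)^{\alpha-1} \leq \int_r^1 \Bigl(\tfrac{d}{ds}M_p^p(s,f)\Bigr)(1-s^2)^{\alpha-1}\,ds
\]
establishes the same $o$-estimate more directly and is a genuine simplification.

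One small circularity to patch: you invoke \eqref{eq:IBPid} for $g=f/I$ before knowing $g\in A_\alpha^p$, but your derivation of \eqref{eq:IBPid} used membership in $A_\alpha^p$ to kill the boundary term. The fix is immediate from your own two observations: since $\|g\|_{H^p}=\|f\|_{H^p}$ and $M_p^p(r,g)\geq M_p^p(r,f)$, you get $0\leq \|g\|_{H^p}^p - M_p^p(r,g) \leq \|f\|_{H^p}^p - M_p^p(r,f)$, so the boundary term for $g$ is dominated by that for $f$ and hence also vanishes; this simultaneously shows $g\in A_\alpha^p$ and validates \eqref{eq:IBPid} for $g$.
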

An immediate corollary of this theorem is that the outer part of a function in $A_{\alpha}^p$ must itself belong to $A_{\alpha}^p$ when $0<\alpha<1$. This is a property that $A_{\alpha}^p$ shares with $B_{\alpha}^p$ (see Dyakonov's remark in \cite{Dyakonov98}*{p.~144}).

\subsection*{Organization} This paper consists of five sections. Section~\ref{sec:preliminaries} contains some basic properties of $A^p_\alpha$ and culminates with the proof of Theorem~\ref{thm:mobinv}. The proof of Theorem~\ref{thm:HL} and its two corollaries can be found in Section~\ref{sec:HL}. Section~\ref{sec:besov} is devoted to the comparison of $A^p_\alpha$ and $B^p_\alpha$ and contains the proof of Theorem~\ref{thm:besovcomp}. The final Section~\ref{sec:shift} contains the proofs of Theorem~\ref{thm:exp} and Theorem~\ref{thm:inner}.

\section{Preliminaries} \label{sec:preliminaries} 
The Hardy--Stein identity \cite{Stein1933} for an analytic function $f$ in the unit disc is 
\begin{equation}\label{eq:hardystein} 
	\frac{d}{dr} M_p^p(r,f) = \frac{p^2}{2r}\int_{r \mathbb{D}} |f(z)|^{p-2} |f'(z)|^2 \,dm(z). 
\end{equation}
As mentioned above, it follows readily from \eqref{eq:hardystein} that the function $r \mapsto M_p^p(r,f)$ is continuously differentiable and strictly increasing (unless $f$ is identically equal to a constant). Using \eqref{eq:hardystein} and Fubini's theorem, we can rewrite \eqref{eq:Apa} as 
\begin{equation}\label{eq:littlewoodpaley} 
	\|f\|_{\alpha,p}^p = |f(0)|^p + \frac{p^2}{4}\int_{\mathbb{D}} |f(z)|^{p-2} |f'(z)|^2 \omega_\alpha(|z|^2)\,dm(z), 
\end{equation}
where
\[\omega_\alpha(x) = \int_x^1 (1-r)^{\alpha-1} \,\frac{dr}{r}.\]
If $\alpha=1$, this integral can be computed explicitly, and we obtain the classical Littlewood--Paley formula for the $H^p$ norm. This indicates that \eqref{eq:littlewoodpaley} is perhaps the most appropriate way of expressing the formula \eqref{eq:Apa} as a Littlewood--Paley integral. For $\alpha\neq1$, the functional equation 
\[\omega_\alpha(x) = \frac{(1-x)^\alpha}{\alpha} + \omega_{\alpha+1}(x) \]
allows us to obtain the following less precise version of \eqref{eq:littlewoodpaley}, that will find use in the proof of Theorem~\ref{thm:besovcomp}.
\begin{lemma}\label{lem:LP} 
	Fix $0<\alpha<\infty$ and $0<p<\infty$. We have
	\[\|f\|_{A^p_\alpha}^p \asymp |f(0)|^p + \int_{\mathbb{D}} |f(z)|^{p-2} |f'(z)|^2 (1-|z|^2)^\alpha \,dm(z).\]
\end{lemma}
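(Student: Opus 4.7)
The plan is to reduce the lemma to the classical Littlewood--Paley equivalence for weighted Bergman spaces by exploiting the functional equation $\omega_\alpha(x) = (1-x)^\alpha/\alpha + \omega_{\alpha+1}(x)$ stated just above the lemma.

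First, I split the weight $\omega_\alpha$ in \eqref{eq:littlewoodpaley} using this functional equation. The resulting $\omega_{\alpha+1}$-weighted integral is, by another application of \eqref{eq:littlewoodpaley} now with parameter $\alpha+1$ in place of $\alpha$, equal to $\frac{4}{p^2}\bigl(\|f\|_{\alpha+1,p}^p - |f(0)|^p\bigr)$. Rearranging yields the key identity
\[\|f\|_{\alpha,p}^p = \frac{p^2}{4\alpha}\int_\mathbb{D}|f(z)|^{p-2}|f'(z)|^2(1-|z|^2)^\alpha\,dm(z) + \|f\|_{\alpha+1,p}^p,\]
and I denote the displayed integral by $I$ for brevity. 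The lower bound $\|f\|_{\alpha,p}^p \gtrsim |f(0)|^p + I$ follows at once: from the identity one has $\|f\|_{\alpha,p}^p \geq \frac{p^2}{4\alpha}I$, and $\|f\|_{\alpha,p}^p \geq |f(0)|^p$ is immediate from \eqref{eq:Apa} together with the nonnegativity of $(d/dr)M_p^p(r,f)$ granted by the Hardy--Stein identity \eqref{eq:hardystein}.

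For the upper bound, I use that $\alpha+1 > 1$ places $\|f\|_{\alpha+1,p}^p$ squarely in the classical weighted Bergman range, where \eqref{eq:bergman} applies and the standard Littlewood--Paley equivalence for such spaces (valid whenever the weight exponent exceeds $-1$, which is automatic here) gives
\[\|f\|_{\alpha+1,p}^p \asymp |f(0)|^p + \int_\mathbb{D}|f|^{p-2}|f'|^2(1-|z|^2)^{\alpha+1}\,dm \leq |f(0)|^p + I,\]
the last inequality using $(1-|z|^2)^{\alpha+1} \leq (1-|z|^2)^\alpha$. Substituting this estimate for $\|f\|_{\alpha+1,p}^p$ back into the key identity yields $\|f\|_{\alpha,p}^p \lesssim |f(0)|^p + I$ and completes the proof.

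The main step is the derivation of the key identity linking $\|f\|_{\alpha,p}^p$ and $\|f\|_{\alpha+1,p}^p$ through the functional equation applied twice to \eqref{eq:littlewoodpaley}; once this is in hand, the lemma is a short consequence of the classical weighted Bergman Littlewood--Paley equivalence, for which the admissibility condition on the weight exponent is ensured by the shift $\alpha \mapsto \alpha+1$.
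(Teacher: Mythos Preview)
Your proof is correct and is essentially the argument the paper gestures at: the paper gives no detailed proof but only states the functional equation $\omega_\alpha(x)=(1-x)^\alpha/\alpha+\omega_{\alpha+1}(x)$ and asserts that it ``allows us to obtain'' the lemma, and your derivation of the identity $\|f\|_{\alpha,p}^p=\tfrac{p^2}{4\alpha}I+\|f\|_{\alpha+1,p}^p$ followed by the reduction of the upper bound to the well-known Littlewood--Paley equivalence in the Bergman range $\alpha+1>1$ is the natural way to flesh this out. One minor wording issue: in your final summary you say the functional equation is ``applied twice'', but in fact you apply it once and invoke \eqref{eq:littlewoodpaley} twice (for $\alpha$ and for $\alpha+1$).
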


Let us continue with a few observations on $A^2_\alpha$. To facilitate this, we recall the binomial series 
\begin{equation}\label{eq:binomialseries} 
	\frac{1}{(1-z)^\alpha} = \sum_{k=0}^\infty c_\alpha(k) z^k, \qquad\text{where}\qquad c_\alpha(k) = \binom{\alpha+k-1}{k} 
\end{equation}
for $0<\alpha<\infty$. For $k=1,2,3,\ldots,$ the identity 
\begin{equation}\label{eq:betaint} 
	\frac{1}{c_\alpha(k)} = 2 \int_0^1 r^{2k-1} (1-r^2)^{\alpha-1}\,dr 
\end{equation}
can be deduced from the properties of the Beta function. 
\begin{lemma}\label{thm:hilbertspace} 
	Suppose that $0<\alpha<\infty$. If $f(z) = \sum_{k\geq0} a_k z^k$, then
	\[\|f\|_{A^2_\alpha}^2 = \sum_{k=0}^\infty \frac{|a_k|^2}{c_\alpha(k)}\]
	and, consequently, $A^2_\alpha$ is a Hilbert space. 
\end{lemma}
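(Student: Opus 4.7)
The plan is to compute $\|f\|_{\alpha,2}^2$ directly from definition \eqref{eq:Apa}, using orthogonality of monomials on each circle $|z|=r$.

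First I would apply Parseval's identity to obtain $M_2^2(r,f)=\sum_{k\ge 0}|a_k|^2 r^{2k}$ for $0<r<1$. Termwise differentiation is justified on compact subsets of $(0,1)$ by absolute convergence of the power series, so
\[\frac{d}{dr}M_2^2(r,f)=\sum_{k=1}^\infty 2k|a_k|^2\, r^{2k-1}.\]
Substituting this expansion into \eqref{eq:Apa} and invoking Tonelli's theorem (legitimate because every term is nonnegative) to interchange sum and integral, the integrals reduce, after the substitution $u=r^2$, to beta functions that can be rewritten via \eqref{eq:betaint} in terms of $c_\alpha(k)$. Together with the contribution $|f(0)|^2=|a_0|^2$ from the $k=0$ term (using $c_\alpha(0)=1$), this produces the claimed identity.

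The Hilbert space assertion is then immediate: the map
\[f=\sum_{k\ge 0}a_k z^k\;\longmapsto\;\bigl(a_k/\sqrt{c_\alpha(k)}\bigr)_{k\ge 0}\]
is a linear isometry of $A_\alpha^2$ into $\ell^2$. It is surjective because $c_\alpha(k)$ grows only polynomially in $k$, so any sequence with $\sum|a_k|^2/c_\alpha(k)<\infty$ defines a power series of radius of convergence at least $1$, hence an analytic function on $\mathbb{D}$. Completeness of $A_\alpha^2$ therefore follows from that of $\ell^2$.

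There is no serious obstacle; the only checks are the routine interchanges of differentiation, summation, and integration, all of which are governed by positivity and absolute convergence on compact subsets of the open unit disc.
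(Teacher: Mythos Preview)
Your proposal is correct and follows essentially the same route as the paper: compute $\frac{d}{dr}M_2^2(r,f)$ by orthogonality, plug into \eqref{eq:Apa}, and evaluate the resulting integrals via \eqref{eq:betaint}. The paper's proof is simply a terser version of what you wrote, and it leaves the Hilbert space assertion implicit, whereas you spell out the isometry with $\ell^2$ and its surjectivity.
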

\begin{proof}
	By orthogonality, we have
	\[\frac{d}{dr} M_2^2(r,f) = \sum_{k=1}^\infty |a_k|^2 (2k) r^{2k-1}.\]
	The asserted result now follows from \eqref{eq:Apa} and \eqref{eq:betaint}. 
\end{proof}

We now turn to the proof of Theorem~\ref{thm:mobinv}, which relies on a preliminary result that will also find use in the proof of Theorem~\ref{thm:HL}. To state it, we set $f_\varrho(z) \coloneq f(\varrho z)$ for $0<\varrho \leq 1$ and $f$ analytic in $\mathbb{D}$.
\begin{lemma}\label{lem:rho1} 
	Fix $0<\alpha<\infty$ and $0<p<\infty$. If $f$ is analytic in $\mathbb{D}$, then the function $\varrho \mapsto \|f_\varrho\|_{\alpha,p}$ is increasing for $0<\varrho<1$. Moreover,
	\[\lim_{\varrho \to 1^-} \|f_\varrho\|_{\alpha,p} = \|f\|_{\alpha,p}.\]
\end{lemma}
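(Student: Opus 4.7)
The plan is to transfer everything to the Littlewood--Paley representation \eqref{eq:littlewoodpaley}, where both claims become transparent consequences of the monotonicity of the weight $\omega_\alpha$.

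First, I apply \eqref{eq:littlewoodpaley} to $f_\varrho$. Since $f_\varrho(0)=f(0)$ and $f_\varrho'(z)=\varrho f'(\varrho z)$, the change of variables $w=\varrho z$ (with $dm(z)=\varrho^{-2}\,dm(w)$) gives
\[
\|f_\varrho\|_{\alpha,p}^p = |f(0)|^p + \frac{p^2}{4}\int_{\varrho\mathbb{D}} |f(w)|^{p-2}|f'(w)|^2 \,\omega_\alpha\!\left(|w|^2/\varrho^2\right) dm(w).
\]
Since $f_\varrho$ extends analytically past $\overline{\mathbb{D}}$, every quantity in this computation is finite, so the use of \eqref{eq:littlewoodpaley} is unproblematic.

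Next, I note that $\omega_\alpha$ is strictly decreasing on $(0,1]$ because $\omega_\alpha'(x) = -(1-x)^{\alpha-1}/x < 0$. Therefore, for every fixed $w \in \mathbb{D}$, the map $\varrho \mapsto \omega_\alpha(|w|^2/\varrho^2)$ is increasing (as $\varrho$ grows, the argument $|w|^2/\varrho^2$ shrinks). The domain of integration $\varrho\mathbb{D}$ also grows with $\varrho$, and the integrand is nonnegative; hence the integral, and with it $\varrho\mapsto\|f_\varrho\|_{\alpha,p}$, is increasing in $\varrho$.

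Finally, rewriting the integral over $\varrho\mathbb{D}$ as one over $\mathbb{D}$ against $\chi_{\varrho\mathbb{D}}(w)$, I have a family of nonnegative integrands that, by the previous paragraph, is monotone in $\varrho$ and converges pointwise on $\mathbb{D}$ to $|f(w)|^{p-2}|f'(w)|^2\omega_\alpha(|w|^2)$ as $\varrho\to 1^-$. Monotone convergence then yields $\lim_{\varrho\to 1^-}\|f_\varrho\|_{\alpha,p}^p = \|f\|_{\alpha,p}^p$, with the equality valid whether the common value is finite or infinite. There is essentially no obstacle in this argument; the only ingredient beyond \eqref{eq:littlewoodpaley} is the elementary monotonicity of $\omega_\alpha$.
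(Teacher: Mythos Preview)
Your argument is correct and takes a genuinely different route from the paper. The paper works directly with the defining formula \eqref{eq:Apa}: it writes $M_p^p(r,f_\varrho)=M_p^p(\varrho r,f)$ and then invokes Hardy's convexity theorem (log-convexity of $r\mapsto M_p(r,f)$ in $\log r$) to conclude that $\frac{d}{dr}M_p^p(\varrho r,f)$ is increasing in $\varrho$ for each fixed $r$; the limit then follows by monotone convergence applied to \eqref{eq:fvarrho}. You instead pass through the Littlewood--Paley representation \eqref{eq:littlewoodpaley} and the substitution $w=\varrho z$, reducing everything to the elementary fact that $\omega_\alpha$ is decreasing. Your approach is the lighter of the two, since it avoids Hardy's convexity theorem altogether; the paper's approach has the minor advantage of staying at the level of the integral means and so applies verbatim to any formula of the shape \eqref{eq:Apa} with $M_p^p$ replaced by a log-convex radial mean. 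Both proofs end with the same monotone convergence step.
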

\begin{proof}
	It is clear that $f_\varrho$ is in $A^p_\alpha$ for every $0<\varrho<1$. To establish the first assertion, we note that $M_p^p(r, f_\varrho) = M_p^p (\varrho r, f)$ so that 
	\begin{equation}\label{eq:fvarrho} 
		\|f_\varrho\|_{\alpha,p}^p = |f(0)|^p+\int_0^1 \left(\frac{d}{dr} M_p^p(\varrho r, f)\right) (1-r^2)^{\alpha-1} \,dr. 
	\end{equation}
	Hardy's convexity theorem \cite{Hardy1915} asserts that $M_p(r,f)$ is a logarithmically convex function of $\log{r}$. It follows that the function
	\[x \mapsto \frac{d}{dx} M_p^p(e^x,f)\]
	is increasing on $-\infty<x<0$, so that
	\[\frac{d}{dx} M_p^p(e^x,f) \leq \frac{d}{dy} M_p^p(e^y,f)\]
	for $x = \log{(\varrho_1 r)}$ and $y = \log{(\varrho_2 r)}$ whenever $0<\varrho_1 \leq \varrho_2 \leq 1$ and $0<r<1$. Hence 
	\begin{equation}\label{eq:multincreasing} 
		\frac{d}{dr} M_p^p(\varrho_1 r,f) = \frac{1}{r} \frac{d}{dx} M_p^p(e^x,f) \leq \frac{1}{r}\frac{d}{dy} M_p^p(e^y,f) = \frac{d}{dr} M_p^p(\varrho_2 r,f). 
	\end{equation}
	We insert this estimate into \eqref{eq:fvarrho} to see that $\|f_{\varrho_1}\|_{\alpha,p} \leq \|f_{\varrho_2}\|_{\alpha,p}$, which completes the proof of the first assertion. The second assertion follows from \eqref{eq:fvarrho}, \eqref{eq:multincreasing}, and the monotone convergence theorem. 
\end{proof}
\begin{proof}
	[Proof of Theorem~\ref{thm:mobinv}] If $g$ is any analytic function in the unit disc, then
	\[\lim_{\varrho \to 1^-} \frac{d}{dr} M_p^p(r,g_\varrho) = \frac{d}{dr} M_p^p(r,g)\]
	for every fixed $0<r<1$. It follows from this and Fatou's lemma that
	\[\|T_{w,\alpha/p} f\|_{\alpha,p}^p \leq \liminf_{\varrho \to 1^-} \|T_{w,\alpha/p} f_\varrho \|_{\alpha,p}^p \leq \sup_{0<\varrho<1} \|T_{w,\alpha/p} f_\varrho \|_{\alpha,p}^p. \]
	Let us now consider a fixed $0<\varrho<1$. We see from formula \eqref{eq:Apa} that
	\[F_\varrho(\alpha) \coloneq \| T_{w,\alpha/p} f_\varrho \|_{\alpha,p}^p\]
	extends to an analytic function in the right half-plane $\mre{\alpha}>0$, since $f_\varrho$ is analytic in the disc $\varrho^{-1} \mathbb{D}$. By the classical weighted conformal invariance of $A^p_\alpha$ for real $\alpha > 1$ we have that $F_\varrho(\alpha)=\|f_\varrho\|_{\alpha,p}^p$ for real $\alpha>1$. By the identity theorem for analytic functions, this holds true for all $\alpha$ in the right half-plane. In particular,
	\[\sup_{0<\varrho<1} \|T_{w,\alpha/p} f_\varrho \|_{\alpha,p}^p = \sup_{0<\varrho<1} \| f_\varrho \|_{\alpha,p}^p = \|f\|_{\alpha,p}^p,\]
	where the final equality is Lemma~\ref{lem:rho1}. This shows that $\| T_{w,\alpha/p} f\|_{\alpha,p} \leq \|f\|_{\alpha,p}$, so $T_{w,\alpha/p} f$ is in $A^p_\alpha$. Since $f = T_{w,\alpha/p} T_{w,\alpha/p} f$, the same argument now gives that $\|f\|_{\alpha,p} \leq \|T_{w,\alpha/p} f\|_{\alpha,p}$. 
\end{proof}

The analytic continuation argument used in the proof of Theorem~\ref{thm:mobinv} can be used to establish what is sometimes called the \emph{power trick} in our setting. 
\begin{theorem}\label{thm:powertrick} 
	Fix $0<p<\infty$ and $0<\alpha<\infty$. If $n$ is a positive integer, then
	\[\|f^n\|_{\alpha,p}^p = \|f\|_{\alpha,pn}^{pn}.\]
\end{theorem}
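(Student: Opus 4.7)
The plan is to mimic the analytic-continuation scheme from the proof of Theorem~\ref{thm:mobinv}. For $\alpha > 1$ the identity is immediate from the Bergman representation \eqref{eq:bergman}, since both sides collapse to $(\alpha-1)\int_{\mathbb{D}} |f(z)|^{pn}(1-|z|^2)^{\alpha-2}\,dm(z)$. The task is therefore to propagate this equality down to the Dirichlet range $0 < \alpha \leq 1$.

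First I would fix $0 < \varrho < 1$ and replace $f$ by the dilation $f_\varrho(z) = f(\varrho z)$, which is analytic on a neighborhood of $\overline{\mathbb{D}}$. Both derivatives $\frac{d}{dr} M_p^p(r, f_\varrho^n)$ and $\frac{d}{dr} M_{pn}^{pn}(r, f_\varrho)$ are then continuous and bounded on the closed interval $[0,1]$, so formula \eqref{eq:Apa} exhibits
\[F_\varrho(\alpha) \coloneq \|f_\varrho^n\|_{\alpha,p}^p - \|f_\varrho\|_{\alpha,pn}^{pn}\]
as a difference of two integrals against the weight $(1-r^2)^{\alpha-1}$. Since this weight is integrable on $[0,1]$ for every $\alpha$ with $\mre{\alpha} > 0$, a standard Morera argument (with a dominating weight $(1-r^2)^{\sigma-1}$ on each strip $\mre{\alpha} \geq \sigma > 0$) shows that $F_\varrho$ is analytic throughout the right half-plane. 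By the Bergman computation above, $F_\varrho$ vanishes on the ray $\alpha > 1$, and hence on all of $\mre{\alpha} > 0$ by the identity theorem.

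Finally, I would let $\varrho \to 1^-$ and invoke Lemma~\ref{lem:rho1} twice: once with the function $f$ and exponent $pn$ to obtain $\|f_\varrho\|_{\alpha,pn} \to \|f\|_{\alpha,pn}$, and once with the function $f^n$ and exponent $p$ to obtain $\|(f^n)_\varrho\|_{\alpha,p} \to \|f^n\|_{\alpha,p}$. The algebraic compatibility $(f^n)_\varrho = (f_\varrho)^n$ is what allows these two applications to combine, and the monotone convergence from Lemma~\ref{lem:rho1} keeps the limits meaningful even when both sides are infinite, so the identity passes cleanly back to the original $f$.

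The whole argument is quite short once the preceding machinery is in hand, and I do not foresee any serious obstacle. The only point that requires any care is the analyticity of $F_\varrho$ on $\mre{\alpha} > 0$, and this is precisely why one passes through the dilation $f_\varrho$ rather than working with $f$ directly: the integral means extend smoothly past $r = 1$, leaving the factor $(1-r^2)^{\alpha-1}$ as the sole source of $\alpha$-dependence, and its integrability in the right half-plane is the classical input.
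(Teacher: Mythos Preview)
Your proposal is correct and mirrors the paper's own argument exactly: the Bergman case $\alpha>1$ via \eqref{eq:bergman}, analytic continuation in $\alpha$ for the dilation $f_\varrho$, and passage to the limit $\varrho\to1^-$ using Lemma~\ref{lem:rho1} together with $(f^n)_\varrho=(f_\varrho)^n$. (It may be worth noting that since $|f^n|^p=|f|^{pn}$ gives $M_p^p(r,f^n)=M_{pn}^{pn}(r,f)$ identically for every $r$, the theorem is in fact immediate from definition~\eqref{eq:Apa} without any continuation; both you and the paper have taken the more illustrative but less direct route.)
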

\begin{proof}
	If $1<\alpha<\infty$, this is trivial due to \eqref{eq:bergman}. Since $(f^n)_\varrho = (f_\varrho)^n$, we complete the proof using analytic continuation and Lemma~\ref{lem:rho1} as above. 
\end{proof}

Theorem~\ref{thm:powertrick} illustrates again that certain properties of the quantities \eqref{eq:Apa} extend from the Bergman range $1<\alpha<\infty$ to the Dirichlet range $0<\alpha<1$. As a companion to Theorem~\ref{thm:powertrick}, we offer the following.
\begin{problem}
	Fix $0<\alpha<1$ and let $f$ be an analytic function in $\mathbb{D}$. Is the function $p \mapsto \|f\|_{A^p_\alpha}$ increasing? 
\end{problem}

\section{Contractive Hardy--Littlewood inequalities} \label{sec:HL}

Let us begin by recalling the basic setup from \cite{Kulikov2022}. The hyperbolic measure on $\mathbb{D}$ is defined by
\[d\mh(z) \coloneq \frac{dm(z)}{(1-|z|^2)^{2}}.\]
For a fixed function $f$ and $0<\sigma<\infty$, we define 
\[\mu(t) \coloneq \mh(\{z\,:\, |f(z)|^{\sigma}(1-|z|^2) > t\}). \]
We will rely on the following result, which is contained in \cite{Kulikov2022}*{Theorem~2.1}.
\begin{lemma}\label{lem:kulikovg} 
	Fix $0<\sigma<\infty$ and let $f$ be an analytic function in $\mathbb{D}$ such that the function $u(z) = |f(z)|^\sigma (1-|z|^2)$ is bounded in the unit disc and tends uniformly to $0$ as $|z|\to 1^-$. The function
	\[g(t) \coloneqq t(\mu(t)+1)\]
	is non-increasing on $(0,t_0)$ where $t_0 = \max_{z \in \mathbb{D}} u(z)$. 
\end{lemma}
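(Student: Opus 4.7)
The plan is to turn the statement into the pointwise differential inequality $t\mu'(t) + \mu(t) + 1 \leq 0$ and derive this via the hyperbolic isoperimetric inequality applied to the super-level sets $\Omega_t \coloneqq \{z \in \mathbb{D} : u(z) > t\}$. First I would record two structural facts. By the uniform decay hypothesis, each $\Omega_t$ with $t>0$ is compactly contained in $\mathbb{D}$, so all the integrations below are classical. Moreover, since $u$ vanishes at each zero of $f$, none of these zeros lie in $\Omega_t$, and on $\Omega_t$ the identity $\log u = \sigma \log|f| + \log(1-|z|^2)$ gives
\[\Delta \log u = -\frac{4}{(1-|z|^2)^2}.\]
By Sard's theorem, $\partial \Omega_t$ is a smooth $1$-manifold for almost every $t$, and it is on this set of $t$ that the main argument operates.

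The next step is to combine Green's identity with the co-area formula. Applying Green's identity to $\log u$ on $\Omega_t$, and using that $u \equiv t$ on the boundary while the outward normal derivative equals $-|\nabla u|/t$, yields
\[\int_{\partial \Omega_t} |\nabla u|\,|dz| = 4\pi\, t\, \mu(t),\]
the factor $\pi$ compensating for the normalization $m(\mathbb{D})=1$. Simultaneously, the co-area formula gives
\[-\mu'(t) = \frac{1}{\pi}\int_{\partial \Omega_t} \frac{|dz|}{(1-|z|^2)^2 |\nabla u|}.\]
Cauchy--Schwarz applied to the factors $|\nabla u|^{1/2}$ and $1/\bigl(|\nabla u|^{1/2}(1-|z|^2)\bigr)$, whose product integrates to the hyperbolic perimeter, now produces
\[\left(\int_{\partial \Omega_t} \frac{|dz|}{1-|z|^2}\right)^2 \leq 4\pi^2\, t\, \mu(t)\, (-\mu'(t)).\]

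The final ingredient is the hyperbolic isoperimetric inequality for the Poincar\'e metric $|dz|/(1-|z|^2)$, which has constant Gaussian curvature $-4$: every relatively compact domain with area $A$ and perimeter $L$ in this metric satisfies $L^2 \geq 4A(A+\pi)$, with equality on hyperbolic discs. Since the hyperbolic area of $\Omega_t$ in this metric equals $\pi \mu(t)$, this translates to
\[\left(\int_{\partial \Omega_t} \frac{|dz|}{1-|z|^2}\right)^2 \geq 4\pi^2\, \mu(t)(\mu(t)+1).\]
Chaining with the Cauchy--Schwarz bound gives $-t\mu'(t) \geq \mu(t)+1$, which is precisely $g'(t) \leq 0$. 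The main obstacle will be handling the critical values of $u$, at which $\partial \Omega_t$ may fail to be smooth and the pointwise identities break down. Sard's theorem shows that these $t$ form a set of one-dimensional measure zero, so the inequality holds almost everywhere, and one then appeals to the absolute continuity of $\mu$ on $(0, t_0)$ (a consequence of the continuity of $u$ on the compact closure of $\Omega_{t/2}$) to integrate the a.e.~inequality and conclude that $g$ is non-increasing on the whole interval.
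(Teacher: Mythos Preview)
The paper does not give its own proof of this lemma; it simply records the result as being ``contained in \cite{Kulikov2022}*{Theorem~2.1}''. Your sketch is precisely the argument from that reference: one computes $\Delta\log u=-4/(1-|z|^2)^2$ on the super-level sets, combines Green's identity and the co-area formula via Cauchy--Schwarz to bound the hyperbolic perimeter of $\Omega_t$, and then invokes the isoperimetric inequality for the curvature $-4$ metric $|dz|/(1-|z|^2)$ to obtain $-t\mu'(t)\geq \mu(t)+1$. So your approach and the cited proof coincide.

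One small point: the closing sentence attributes the absolute continuity of $\mu$ to ``the continuity of $u$'', but mere continuity does not suffice (a monotone function can be singular). What actually does the job is that $u$ is real-analytic on the compact set $\overline{\Omega_{t/2}}$, so the co-area formula itself furnishes the absolute continuity of $\mu$ on compact subintervals of $(0,t_0)$; this is how the a.e.\ inequality $g'\leq 0$ upgrades to genuine monotonicity.
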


Note that in the present section $\sigma = \kappa^{-1}$ compared to the Introduction. 

If $f$ satisfies the assumptions of Lemma~\ref{lem:kulikovg}, then we set 
\begin{equation}\label{eq:Phi_def} 
	\Phi(\alpha,\sigma,f) \coloneqq t_0^{\alpha} - \alpha \int_0^{t_0} g'(t) t^{\alpha - 1} \, dt 
\end{equation}
for $0<\alpha ,\sigma<\infty$ and for $g$ as in Lemma~\ref{lem:kulikovg}.

We divide the proof of Theorem~\ref{thm:HL} into five steps, and our plan is as follows. Note that the first two steps are logically independent of each other, but that both rely on Lemma~\ref{lem:kulikovg}.
\begin{itemize}
	\item[\textbf{1.}] Prove that $\Phi(\alpha,\sigma,f) = \|f\|^{\sigma\alpha}_{\alpha,\sigma\alpha}$ under the additional assumptions that $f$ is analytic in the closed unit disc (in particular: Lemma~\ref{lem:kulikovg} applies) and that $f$ does not vanish on the unit circle. This is the part of the proof of Theorem~\ref{thm:HL} that relies on analytic continuation. This result is Lemma~\ref{lem:alternative_norm} below. 
	\item[\textbf{2.}] Establish the analog of inequality~\eqref{ineq:contractive_main} of Theorem~\ref{thm:HL} if $f$ satisfies the assumptions of Lemma~\ref{lem:kulikovg} and if $\|f\|_{\alpha,p}^p$ and $\|f\|_{\beta,q}^q$ are replaced by $\Phi(\alpha,p/\alpha,f)$ and $\Phi(\beta,q/\beta,f)$ in \eqref{ineq:contractive_main}. Moreover, we describe in terms of $\Phi$ (see~\eqref{eq:Phi_alphabeta}) the functions for which the equality in~\eqref{ineq:contractive_main} is attained. This is Lemma~\ref{lem:contr_bd}. 
	\item[\textbf{3.}] Employ Lemma~\ref{lem:rho1} to establish the inequality \eqref{ineq:contractive_main} of Theorem~\ref{thm:HL} without the additional assumptions on $f$. The key point is that if $f$ is a nontrivial analytic function in $\mathbb{D}$, the functions $f_\varrho(z) = f(\varrho z)$ satisfy the assumptions of Lemma~\ref{lem:alternative_norm} for almost every $0<\varrho<1$ and Lemma~\ref{lem:contr_bd} for every $0<\varrho<1$. This is Lemma~\ref{lem:prototype}. 
\end{itemize}
In order to complete the proof of Theorem~\ref{thm:HL} we need to prove the final assertion. 
\begin{itemize}
	\item[\textbf{4.}] Prove Corollary~\ref{cor:uniformzero} (using only Lemma~\ref{lem:prototype}). The virtue of having Corollary~\ref{cor:uniformzero} in this setting is that we may now apply Lemma~\ref{lem:contr_bd} to any function in $A^p_\alpha$. This puts us in a position to finish the proof of Theorem~\ref{thm:HL}. 
	\item[\textbf{5.}] The basic idea is to show that there is some $\gamma_0$ between $\alpha$ and $\beta$ such that the quantities $\|\cdot\|_{\beta, \sigma \beta}$ and $\|\cdot\|_{\gamma_0, \sigma \gamma_0}$ can be represented in terms of $\Phi$. Since the functions $f$ for which we have $\|f\|_{\alpha, \sigma \alpha} = \|f\|_{\beta, \sigma \beta} $ should also satisfy $\|f\|_{\gamma_0, \sigma \gamma_0} = \|f\|_{\beta, \sigma \beta} $, we can apply Lemma~\ref{lem:contr_bd} to deduce that $\Phi(\beta,\sigma, f)$ satisfies~\eqref{eq:Phi_alphabeta}. Using this and representation of $\|\cdot\|_{\beta, \sigma \beta}$ in terms of $\Phi$, we apply Corollary~\ref{cor:pest} to obtain the desired conclusion. 
\end{itemize}

As outlined above, we begin by proving that \eqref{eq:Phi_def} provides an alternative expression for the quantity \eqref{eq:Apa} under certain additional assumptions on $f$.
\begin{lemma}\label{lem:alternative_norm} 
	Let $f$ be an analytic function in the closed unit disc that does not vanish on the unit circle. If $0<\alpha,\sigma<\infty$, then 
	\begin{equation}\label{eq:norm_alt} 
		\Phi(\alpha,\sigma,f) = \|f\|^{\sigma\alpha}_{\alpha,\sigma\alpha}. 
	\end{equation}
\end{lemma}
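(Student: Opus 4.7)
My plan is to verify \eqref{eq:norm_alt} directly in the Bergman range $\alpha>1$, where both sides admit tractable closed-form expressions, and then to extend the identity to the full range $0<\alpha<\infty$ by recognizing both sides as analytic functions of $\alpha$ on the right half-plane $\{\mre(\alpha)>0\}$.

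For real $\alpha>1$, formula \eqref{eq:bergman} with $p=\sigma\alpha$ immediately gives
\[\|f\|_{\alpha,\sigma\alpha}^{\sigma\alpha}=(\alpha-1)\int_{\mathbb{D}} u(z)^\alpha\,d\mh(z)=\alpha(\alpha-1)\int_0^{t_0}t^{\alpha-1}\mu(t)\,dt,\]
the second equality coming from the layer cake formula applied to $u^\alpha$. For the $\Phi$ side I would carry out a Lebesgue--Stieltjes integration by parts in $\int_0^{t_0}g'(t)t^{\alpha-1}\,dt$: the boundary term at $t=t_0$ contributes $t_0^\alpha$ because $g(t_0)=t_0$ (since $\mu(t_0)=0$), while the boundary term at $t=0$ vanishes for $\alpha>1$ because $g$ is bounded on $(0,t_0]$. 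Substituting $g(t)=t\mu(t)+t$ and simplifying then produces $\Phi(\alpha,\sigma,f)=\alpha(\alpha-1)\int_0^{t_0}t^{\alpha-1}\mu(t)\,dt$, matching the norm side.

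Next I set $H(\alpha):=\|f\|_{\alpha,\sigma\alpha}^{\sigma\alpha}$ and $h(\alpha):=\Phi(\alpha,\sigma,f)$ and verify that each extends to an analytic function on $\{\mre(\alpha)>0\}$. Since $f$ extends analytically past $\overline{\mathbb{D}}$ and is nonvanishing on $\partial\mathbb{D}$, the powers $|f(z)|^{\sigma\alpha}=\exp(\sigma\alpha\log|f(z)|)$ (understood to be $0$ at the finitely many zeros of $f$) are analytic in $\alpha$; combined with the boundedness of $\frac{d}{dr}M_{\sigma\alpha}^{\sigma\alpha}(r,f)$ up to $r=1$ and the $(1-r)^{\mre(\alpha)-1}$-integrability near $r=1$, a standard dominated-convergence argument yields analyticity of $H$. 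For $h$, the hypotheses imply $u(z)\asymp 1-|z|^2$ near $\partial\mathbb{D}$, and an explicit computation of the hyperbolic area of the level sets $\{u>t\}$ produces the sharp asymptotic $g(0^+)-g(t)=O(t)$ as $t\to 0^+$. Together with Lemma~\ref{lem:kulikovg}, which makes $-dg$ a finite positive Borel measure on $(0,t_0]$, this estimate ensures that $\int_0^{t_0}t^{\alpha-1}\,dg(t)$ converges absolutely and depends analytically on $\alpha$ throughout the right half-plane.

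Since $H=h$ on $(1,\infty)$ by the first step and both are analytic on the connected open set $\{\mre(\alpha)>0\}$, the identity theorem forces $H\equiv h$ there, which in particular yields \eqref{eq:norm_alt} for all $\alpha\in(0,\infty)$. The main technical hurdle is the asymptotic $g(0^+)-g(t)=O(t)$, which is precisely where the non-vanishing hypothesis on $\partial\mathbb{D}$ enters: it forces $u$ to decay on the boundary at the exact rate $1-|z|^2$, which is what is needed to keep the Stieltjes integral defining $\Phi$ well-behaved and analytic in $\alpha$ in the strip $0<\mre(\alpha)<1$, where the naive integration by parts used in the Bergman range breaks down.
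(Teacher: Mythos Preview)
Your proposal is correct and follows essentially the same route as the paper's proof: both verify \eqref{eq:norm_alt} for $\alpha>1$ via \eqref{eq:bergman} and the layer-cake/integration-by-parts reduction to $\alpha(\alpha-1)\int_0^{t_0}t^{\alpha-1}\mu(t)\,dt$, then argue that both sides extend analytically in $\alpha$ to the right half-plane, with the crucial estimate $g(0^+)-g(t)=O(t)$ (equivalently $\mu(t)\geq \|f\|_{H^\sigma}^\sigma/t - M$) supplied by the non-vanishing of $f$ on $\partial\mathbb{D}$. The paper carries out the last estimate by an explicit Lipschitz bound $|f(re^{i\theta})|^\sigma\geq|f(e^{i\theta})|^\sigma-L(1-r^2)$ and a direct computation of the hyperbolic area of the resulting strip, which is precisely the ``explicit computation of the hyperbolic area of the level sets'' you allude to.
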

\begin{proof}
	Since $f$ is assumed to be analytic in the closed unit disc, it is plainly bounded there. Let us assume for a moment that $ \alpha > 1$ and use \eqref{eq:bergman} to compute
	\[\|f\|^{\sigma\alpha}_{\alpha,\sigma\alpha} = (\alpha-1)\int_{0}^{t_0^{\alpha}} \mu(t^{1/\alpha}) \, dt = (\alpha-1) \int_{0}^{t_0^{\alpha}} \frac{g(t^{1/\alpha})}{t^{1/\alpha}} \,dt -(\alpha-1)t_0^{\alpha}.\]
	After integration by parts, we get
	\[\|f\|^{\sigma\alpha}_{\alpha,\sigma\alpha} = -(\alpha-1) t_0^{\alpha} + \alpha g(t_0) t_0^{\alpha-1} - \alpha\lim_{t \to 0^+} g(t^{1/\alpha}) t^{1-{1/\alpha}} - \int_0^{t_0^{\alpha}} g'(t^{1/\alpha}) \, dt.\]
	Note that $\mu(t_0) = 0$ whence $g(t_0) = t_0$, and since $g$ is bounded (because $f$ is bounded) and $\alpha> 1$, we get
	\[\lim_{t \to 0^+} g(t^{1/\alpha}) t^{1-{1/\alpha}} = 0.\]
	Therefore, after a change of variables in the last integral, we obtain~\eqref{eq:norm_alt} for $\alpha > 1$.
	
	To extend \eqref{eq:norm_alt} to $\alpha > 0$, we will show that the functions on both sides of \eqref{eq:norm_alt} admit analytic continuation for all $\alpha$ in the right half-plane. To this end, we begin by verifying that either expression is well defined for $\alpha$ in this range. It is immediate from \eqref{eq:Apa} and the assumption that $f$ is analytic in the closed unit disc that 
	\[\|f\|_{\alpha, \sigma \alpha} < \infty. \]
	for all $\alpha>0$. We show next that the left-hand side of \eqref{eq:norm_alt} is also finite for $\alpha>0$. Since $f$ is bounded, we have that $|f(z)|^\sigma (1-|z|^2) \to 0$ uniformly as $|z|\to 1^-$, so it follows that for any fixed $c>0$, we have
	\[-\int_{c}^{t_0} g'(t) t^{\alpha-1} \, dt \leq c^{\alpha-1} (g(c) - g(t_0)) < \infty.\]
	It therefore suffices to verify that for some fixed $c>0$ we have 
	\begin{equation}\label{eq:fin_cont} 
		- \int_0^{c} g'(t) t^{\alpha - 1} \, dt < \infty. 
	\end{equation}
	for $0<\alpha<1$. It follows from Lemma~\ref{lem:kulikovg} that $-g' \geq 0$, so we can use a dyadic decomposition to estimate this integral from above by
	\[\sum_{n=0}^\infty (g(c2^{-n-1}) - g(c2^{-n})) c^{\alpha-1}2^{-n(\alpha-1)}.\]
	To prove that this sum is finite, it suffices to show that there exists $C>0$ such that for every sufficiently small $t > 0$ we have 
	\[g(0) - g(t) \leq C t. \]
	By \cite{Kulikov2022}*{Remark~3.2}, we have $g(0) = \|f\|^{\sigma}_{1,\sigma} = \|f\|_{H^\sigma}^\sigma.$ Therefore, the latter inequality will follow if we can establish that 
	\begin{equation}\label{eq:m_h_below} 
		\mu(t) \geq \frac{\|f\|^{\sigma}_{H^{\sigma}}}{t} - M 
	\end{equation}
	for some positive $M$ independent of $t$. Since $f$ is analytic in the closed unit disc and has no zeros on the unit circle by assumption, there exists $L = L(\sigma,f)>0$ such that
	\[|f(r e^{i\theta})|^{\sigma} \geq |f(e^{i \theta})|^{\sigma} - L(1-r) \geq |f(e^{i \theta})|^{\sigma} - L(1-r^2),\]
	for $0 < r < 1$. It follows that the left-hand side of~\eqref{eq:m_h_below} may be estimated from below by
	\[\mh(\{z = re^{i\theta}\,:\, |f(e^{i \theta})|^{\sigma}(1-r^2) - L(1-r^2)^{2} > t\}).\]
	Set $\varrho \coloneqq 1 - r^2,$ and note that $|f(e^{i \theta})|^{\sigma}\varrho - L\varrho^{2} > t$ is satisfied if and only if $\varrho$ belongs to the segment
	\[I_t:=\left[ \frac{|f(e^{i \theta})|^{\sigma} - \sqrt{|f(e^{i \theta})|^{2\sigma} - 4tL}}{2L}, \frac{|f(e^{i \theta})|^{\sigma} + \sqrt{|f(e^{i \theta})|^{2\sigma} - 4tL}}{2L} \right].\]
	Since $t$ can be chosen sufficiently small and $f$ has no zeros on the unit circle, this set is well defined. Using the substitution $\varrho = 1-r^2$, we next write
	\[\mh(\{z = re^{i\theta}\,:\, |f(e^{i \theta})|^{\sigma}\varrho - L\varrho^{2} > t\}) = \int_0^{2\pi} \int_{I_t} \frac{1}{\varrho^2} \, d \varrho \, \frac{d \theta}{2\pi}.\]
	We proceed with computing
	\[\int_{I_t} \frac{1}{\varrho^2} \, d \varrho = \frac{\sqrt{|f(e^{i\theta})|^{2\sigma} - 4tL}}{t}\geq \frac{|f(e^{i\theta})|^\sigma}{t}-CL, \]
	which yields the bound
	\[\mh(\{z = re^{i\theta}\,:\, |f(e^{i \theta})|^{\sigma}\varrho - L\varrho^{2} > t\}) \geq \frac{\|f\|^{\sigma}_{H^{\sigma}}}{t} -CL\]
	where $C$ is independent of $t$. This finishes the proof of~\eqref{eq:m_h_below}, and therefore~\eqref{eq:fin_cont} has been verified. 

	The next step is to establish that both sides of \eqref{eq:norm_alt} are differentiable with respect to $\alpha.$ We start with the expression on the left-hand side. Differentiating under the integral sign, it is sufficient to show the convergence of the integral 
	\[\int_0^{t_0} -g'(t) t^{\gamma-1}|\log t|\,dt\]
	for all $\gamma > 0$. Taking any $0 < \beta < \gamma$ and using the estimate $|\log t|\leq C_{t_0,\beta,\gamma}t^{\beta-\gamma}$ for $0 < t < t_0$ and the fact that $\Phi(\beta, \sigma, f)$ is finite we get the result.
	
	For the expression $\|f\|_{\gamma,\sigma \gamma}^{\sigma \gamma} $ on the right-hand side of \eqref{eq:norm_alt}, we use the Hardy--Stein identity \eqref{eq:hardystein}. It suffices to show that 
	\begin{equation}\label{eq:surface} 
		\int_{0}^1 \int_{0}^r \int_{0}^{2\pi} \frac{\varrho |f'(\varrho e^{i \theta})|^2 }{r(1-r^2)|f(\varrho e^{i \theta})|^2} ( |f(\varrho e^{it})|^{\sigma} (1-r^2))^{\gamma} \, d \theta \, d\varrho \, dr
	\end{equation}
	is differentiable with respect to $\gamma$ for $\mre{\gamma} > 0$. For a fixed value $\tau$, we now define $D_{\tau} \coloneqq \{(\theta,\varrho, r)\,:\,|f(\varrho e^{it})|^{\sigma}(1-r^2) = \tau\}$. Rewriting \eqref{eq:surface} in terms of integrals over these level surfaces and noting that $|f(\varrho e^{it})|^{\sigma}(1-r^2)\leq t_0$, we see that it remains to show that the function
	\[F(\gamma) \coloneqq \int_{0}^{t_0} \tau^{\gamma} \,d h(\tau) \,\]
	is differentiable for $\mre{\gamma}>0$, where $dh$ is some nonnegative measure. This is done in the same way as in the case of $\Phi(\gamma,\sigma,f)$ by taking some $0 < \beta < \mre{\gamma}$ and using the estimate $|\log t|\leq C_{t_0,\beta,\gamma}t^{\beta-\mre{\gamma}}$ for $0 < t < t_0$.
	
	Since both sides of \eqref{eq:norm_alt} admit analytic continuation to the right half-plane and are equal for real $\alpha > 1$, we obtain that \eqref{eq:norm_alt} holds for all real $\alpha>0$. 
\end{proof}

\begin{remark}
	By being more careful it is in fact possible to show that for the function satisfying the assumptions of Lemma \ref{lem:alternative_norm} we have 
	\[\lim_{t\to 0^+} \mu(t) - \frac{\|f\|^\sigma_{H^\sigma}}{t} = -1 - \frac{\sigma}{4\pi}\int_0^{2\pi}\mre\left(\frac{f'(e^{i\theta})e^{i\theta}}{f(e^{i\theta})}\right)d\theta = -1-\frac{\sigma n}{2},\]
	where $n$ is the number of zeroes of $f$ in the unit disk with multiplicity. Curiously, the expression $\frac{\sigma n}{2}$ is the integral of the distributional Laplacian of $\log |f(z)|^\sigma$ against the measure $dm(z)$. Note that the log-subharmonicity of $\log |f(z)|^\sigma$ played the crucial role in the proof of \cite{Kulikov2022}*{Theorem~2.1}, although we do not know of an a priori reason for its appearance here. 
\end{remark}

We next establish a version of Theorem~\ref{thm:HL} for $\Phi$, provided the assumptions of Lemma~\ref{lem:kulikovg} are met. 
\begin{lemma}\label{lem:contr_bd} 
	Suppose that $0<\alpha<\beta<\infty$ and $0<p<q<\infty$ satisfy $\sigma = p/\alpha=q/\beta$. If $f$ is an analytic function in the unit disc such that
	\[|f(z)|^\sigma (1-|z|^2) \to 0\]
	uniformly as $|z| \to 1^-$, then 
	\begin{equation}\label{eq:fr_HL} 
		\left(\Phi(\beta,\sigma,f)\right)^{1/q} \leq \left(\Phi(\alpha,\sigma,f)\right)^{1/p}. 
	\end{equation}
	Moreover, the equality in \eqref{eq:fr_HL} is attained if and only if either both sides are infinite or 
	\begin{equation}\label{eq:Phi_alphabeta} 
		\Phi(\beta, \sigma, f) = t_0^\beta. 
	\end{equation}
\end{lemma}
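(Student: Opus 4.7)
My plan is to bound $\Phi(\beta)$ in terms of $\Phi(\alpha)$ using the monotonicity of $g$ from Lemma~\ref{lem:kulikovg}, then close the estimate via Bernoulli's inequality. Set $P(\gamma)\coloneq\Phi(\gamma)/t_0^\gamma$. From \eqref{eq:Phi_def} we read off that
\[
P(\gamma) = 1 + \gamma\, t_0^{-\gamma} \int_0^{t_0}(-g'(t))\, t^{\gamma-1}\, dt \geq 1,
\]
and that \eqref{eq:fr_HL} is equivalent, after raising to the $\sigma\alpha\beta$-th power, to $P(\alpha)^\beta \geq P(\beta)^\alpha$.

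The first step is an elementary integral bound. Since $g$ is non-increasing, $-g'(t) \geq 0$ almost everywhere on $(0,t_0]$, and since $\beta > \alpha$ with $t \leq t_0$ there, the pointwise inequality $t^{\beta-1} \leq t_0^{\beta-\alpha}\, t^{\alpha-1}$ holds. Integrating $-g'(t)\, dt$ against this gives
\[
\Phi(\beta) - t_0^\beta \,\leq\, \frac{\beta}{\alpha}\, t_0^{\beta-\alpha}\bigl(\Phi(\alpha) - t_0^\alpha\bigr),
\]
which rearranges to $P(\alpha) \geq 1 + (\alpha/\beta)\bigl(P(\beta) - 1\bigr)$.

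The second step applies Bernoulli's inequality $(1+y)^\lambda \leq 1 + \lambda y$ for $y \geq 0$ and $\lambda \in (0,1)$, which rearranges to $(1 + \lambda y)^\beta \geq (1+y)^{\lambda\beta}$. Taking $\lambda = \alpha/\beta$ and $y = P(\beta) - 1 \geq 0$ and combining with step one yields
\[
P(\alpha)^\beta \,\geq\, \bigl(1 + (\alpha/\beta)(P(\beta) - 1)\bigr)^\beta \,\geq\, P(\beta)^\alpha,
\]
which proves \eqref{eq:fr_HL}. For the equality case, equality in step one forces $(-g'(t))(t_0^{\beta-\alpha} - t^{\beta-\alpha}) = 0$ for a.e.~$t \in (0,t_0]$; since the second factor vanishes only at $t = t_0$, this forces $-g' \equiv 0$ almost everywhere and hence $\Phi(\beta) = t_0^\beta$. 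Equality in Bernoulli (with $\lambda \in (0,1)$) requires $y = 0$, giving the same condition. So equality in \eqref{eq:fr_HL} with both sides finite is equivalent to \eqref{eq:Phi_alphabeta}. If $\Phi(\alpha) = \infty$, inequality \eqref{eq:fr_HL} is trivial; and if $\Phi(\beta) = \infty$ with $\Phi(\alpha) < \infty$, the bound from step one gives a contradiction, leaving only the case $\Phi(\alpha) = \Phi(\beta) = \infty$ for both sides to be infinite.

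I do not foresee a substantive obstacle; the argument is elementary once the quantity $P(\gamma)$ is isolated. The only subtlety worth noting is that \eqref{eq:Phi_def} employs the almost-everywhere derivative $g'$ rather than the full distributional derivative $dg$, but this is precisely what is needed, since the equality characterization $\Phi(\beta) = t_0^\beta$ amounts exactly to the vanishing of $\int_0^{t_0}(-g'(t))\,t^{\beta-1}\,dt$.
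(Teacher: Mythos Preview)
Your proof is correct and follows essentially the same approach as the paper: both arguments combine the pointwise bound $(t/t_0)^{\beta-1}\le (t/t_0)^{\alpha-1}$ against $-g'\ge 0$ with the elementary inequality $(1+\alpha c)^{\beta}\ge (1+\beta c)^{\alpha}$ for $c\ge 0$ and $\alpha<\beta$, which you phrase as Bernoulli and the paper phrases as the monotonicity of $x\mapsto (1+cx)^{1/x}$. Your normalization $P(\gamma)=\Phi(\gamma)/t_0^{\gamma}$ and your explicit handling of the infinite cases are cosmetic additions to the same underlying argument.
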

\begin{proof}
	Using the definition of $\Phi$ from \eqref{eq:Phi_def}, we rewrite desired the inequality~\eqref{eq:fr_HL} as
	\[\left(t_0^{\alpha} + \alpha \int_0^{t_0} (-g'(t)) t^{\alpha-1} \, dt \right)^{1/(\sigma \alpha)} \geq \left(t_0^{\beta} + \beta \int_0^{t_0} (-g'(t)) t^{\beta-1} \, dt \right)^{1/(\sigma \beta)},\]
	where $t_0$ and $g$ are defined for the function $f$ as in Lemma~\ref{lem:kulikovg}. Note in particular that $t_0$ and $g$ only depend on $f$ and $\sigma$ (and not on $\alpha,\beta,p,q$). Set
	\[J:= \int_0^{t_0} (-g'(t)) \left(\frac{t}{t_0}\right)^{\alpha-1} \, dt\]
	Which we assume is finite. Since $f$ satisfies the assumption of Lemma~\ref{lem:kulikovg}, we infer that $g'\leq 0$. Using this and that $\alpha<\beta$, we get
	\[J \geq \int_0^{t_0} (-g'(t)) \left(\frac{t}{t_0}\right)^{\beta-1} \, dt.\]
	Hence, it suffices to show that
	\[(t_0^{\alpha} + \alpha t_0^{\alpha-1} J)^{1/(\sigma\alpha)} \geq (t_0^\beta + \beta t_0^{\beta-1} J)^{1/(\sigma \beta)}.\]
	Raising both sides to the power $\sigma$ and dividing them by $t_0 > 0$, we get
	\[\left(1 + \alpha \frac{J}{t_0}\right)^{1/\alpha} \geq \left(1 + \beta \frac{J}{t_0}\right)^{1/\beta}.\]
	The latter inequality is true since the function $(1+cx)^{1/x}$ is non-increasing in $x$ for $c \geq 0.$ Moreover, the equality is attained if and only if $c=0$, i.e. $g' \equiv 0$ implying~\eqref{eq:Phi_alphabeta}. 
\end{proof}

We can now prove the first part of Theorem~\ref{thm:HL}.
\begin{lemma}\label{lem:prototype} 
	Suppose that $0<\alpha<\beta<\infty$ and $0<p<q<\infty$ satisfy $p/\alpha=q/\beta$. If $f$ is in $A^p_\alpha$, then
	\[\|f\|_{\beta,q} \leq \|f\|_{\alpha,p}.\]
\end{lemma}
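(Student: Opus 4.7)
The plan is to combine the three ingredients (Lemmas~\ref{lem:alternative_norm}, \ref{lem:contr_bd}, and \ref{lem:rho1}) with a standard dilation-approximation argument, as outlined in the roadmap preceding the statement. Set $\sigma \coloneq p/\alpha = q/\beta$, so that $p = \sigma\alpha$ and $q = \sigma\beta$, and for $0<\varrho<1$ consider the dilated functions $f_\varrho(z) = f(\varrho z)$. Each $f_\varrho$ extends analytically to the closed disc $\overline{\mathbb D}$, and since the zero set of $f$ is discrete in $\mathbb D$, the function $f_\varrho$ fails to vanish on the unit circle for \emph{every} $\varrho$ outside a countable (and thus measure-zero) exceptional set $E\subset(0,1)$.

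For every $\varrho\in(0,1)\setminus E$, Lemma~\ref{lem:alternative_norm} applies to $f_\varrho$ at both exponents, yielding the identities
\[
\Phi(\alpha,\sigma,f_\varrho) = \|f_\varrho\|_{\alpha,p}^{\,p}
\qquad\text{and}\qquad
\Phi(\beta,\sigma,f_\varrho) = \|f_\varrho\|_{\beta,q}^{\,q}.
\]
Moreover, because $f_\varrho$ is bounded on $\overline{\mathbb D}$, the quantity $|f_\varrho(z)|^\sigma(1-|z|^2)$ tends to $0$ uniformly as $|z|\to 1^-$, so the hypothesis of Lemma~\ref{lem:kulikovg} and hence of Lemma~\ref{lem:contr_bd} is satisfied. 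Applying Lemma~\ref{lem:contr_bd} then gives
\[
\|f_\varrho\|_{\beta,q} = \bigl(\Phi(\beta,\sigma,f_\varrho)\bigr)^{1/q} \leq \bigl(\Phi(\alpha,\sigma,f_\varrho)\bigr)^{1/p} = \|f_\varrho\|_{\alpha,p}.
\]

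To conclude, choose a sequence $\varrho_n \to 1^-$ avoiding the null set $E$. By Lemma~\ref{lem:rho1}, we have $\|f_{\varrho_n}\|_{\alpha,p}\to \|f\|_{\alpha,p}$ and $\|f_{\varrho_n}\|_{\beta,q}\to \|f\|_{\beta,q}$ (with the latter limit possibly infinite a priori, but the former finite by hypothesis), and passing to the limit in the previous display yields $\|f\|_{\beta,q}\leq \|f\|_{\alpha,p}$.

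The only substantive worry is whether the exceptional set of radii $\varrho$ (where $f_\varrho$ might vanish on the unit circle) can be arranged to exclude a sequence tending to $1$, but this is immediate: the zeros of a nontrivial analytic function in $\mathbb D$ form a discrete set, so only countably many moduli can appear, leaving a full-measure set of admissible $\varrho$. The trivial case $f\equiv 0$ is handled separately. Thus no new technical obstacle arises in this step; the main work has already been invested in the preceding lemmas.
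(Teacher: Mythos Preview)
Your argument is correct and follows exactly the approach of the paper's own proof: handle $f\equiv 0$ trivially, apply Lemma~\ref{lem:alternative_norm} to $f_\varrho$ for almost every $\varrho$, apply Lemma~\ref{lem:contr_bd} to every $f_\varrho$, and pass to the limit via Lemma~\ref{lem:rho1}. The paper's proof is simply a terser rendition of precisely these steps.
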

\begin{proof}
	If $f \equiv 0$, there is nothing to do. If $f \not \equiv 0$, then the function $f_\varrho$ satisfies the assumptions of Lemma~\ref{lem:alternative_norm} for almost every $0<\varrho<1$ and Lemma~\ref{lem:contr_bd} for every $0<\varrho<1$. We obtain the stated result after combining these two results and appealing to Lemma~\ref{lem:rho1}. 
\end{proof}

We now use~Lemma~\ref{lem:prototype} to establish Corollary~\ref{cor:uniformzero}.
\begin{proof}
	[Proof of Corollary~\ref{cor:uniformzero}] For $1 \leq \alpha < \infty$, equation~\eqref{eq:bound_lim_zero} can be deduced from the fact that polynomials are dense in $A^p_\alpha$ (see e.g.~\cite{Kulikov2022}*{p.~939}). To extend~\eqref{eq:bound_lim_zero} to $0<\alpha<1$, we use Lemma~\ref{lem:prototype} which asserts that $A^p_\alpha$ is contained in, say, $A^{p/\alpha}_{1}=H^{p/\alpha}$. 
\end{proof}
\begin{proof}
	[Final part of the proof of Theorem~\ref{thm:HL}] In view of Lemma~\ref{lem:prototype}, all that remains is to show that equality in~\eqref{ineq:contractive_main} is attained only for the functions $f(z)=C\left(1-\overline{w}z\right)^{-2\alpha/p}$. Recall that Lemma~\ref{lem:alternative_norm} asserts that
	\[\|f_{\varrho}\|^{\sigma\alpha}_{\alpha,\sigma\alpha} = \Phi(\alpha,\sigma,f_\varrho) = t_0^{\alpha} - \alpha \int_0^{t_0} g_{\varrho}'(t) t^{\alpha - 1} \, dt \]
	holds for the functions $f_{\varrho}(z)=f(\varrho z)$ for almost every $0<\varrho<1$, where $t_0$ and $g_{\varrho}$ are defined as in Lemma~\ref{lem:kulikovg} for the function $f_{\varrho}$. By Lemma~\ref{lem:rho1}, we know that $\|f_{\varrho}\|^{\sigma\alpha}_{\alpha,\sigma\alpha} \to \|f\|^{\sigma\alpha}_{\alpha,\sigma\alpha}$ as $\varrho \to 1^-$. We now claim that we are done if we can show that 
	\begin{equation}\label{eq:Phi_finite} 
		\Phi(\alpha,\sigma,f) < \infty 
	\end{equation}
	for every function $f$ in $A^p_\alpha$. Indeed, if \eqref{eq:Phi_finite} holds, then we may repeat the argument from the proof of Lemma~\ref{lem:alternative_norm} to show that the function $\Phi$ and $\|f\|^{\sigma \gamma}_{\gamma,\sigma\gamma}$ are differentiable for $\mre{\gamma} > \alpha$ and since they are equal for real $\gamma > 1$ we get 
	\begin{equation}\label{eq:gamma_beta_F} 
		\Phi(\gamma, \sigma,f) = \|f\|^{\sigma \gamma}_{\gamma,\sigma\gamma}, 
	\end{equation}
	for all real $\gamma > \alpha$. 
	
	Fix any $\gamma_0$ in $(\alpha, \beta)$. Since
	\[\|f\|_{\beta,q} = \|f\|_{\beta, \sigma \beta} \leq \|f\|_{\gamma_0, \sigma \gamma_0} \leq \|f\|_{\alpha, \sigma \alpha} = \|f\|_{\alpha, p},\]
	we see that $\|f\|_{\alpha, p} = \|f\|_{\beta,q}$ implies that $\|f\|_{\beta, \sigma \beta} = \|f\|_{\gamma_0, \sigma \gamma_0}$. It follows from~\eqref{eq:gamma_beta_F} that $ {\Phi(\gamma_0,\sigma,f)}= \|f\|^{\sigma \gamma_0}_{\gamma_0, \sigma \gamma_0} $ and $\Phi(\beta,\sigma,f) = \|f\|^{\sigma \beta}_{\beta, \sigma \beta}.$ Corollary~\ref{cor:uniformzero} ensures that $f$ meets the assumptions of Lemma~\ref{lem:contr_bd} with parameters $\gamma_0$ and $\beta$, which implies
	\[t_0^\beta = \Phi(\beta, \sigma, f) = \|f\|^{\sigma\beta}_{\beta, \sigma\beta}. \]
	It follows from Corollary~\ref{cor:uniformzero} that $\max_{z\in \mathbb{D}} |f(z)|^{\kappa}(1-|z|^2)$ is attained at some point $w \in \mathbb{D}.$ Therefore, $$ |f(w)|^{q}(1-|w|^2)^{\beta} = \|f\|^q_{q, \beta}, $$ and from Corollary \ref{cor:pest} we deduce that $f(z) = C\left(1-\overline{w}z\right)^{-2\alpha/p}$ for a constant $C$ and a point $w$ in $\mathbb{D}$. 
	
	It remains to establish \eqref{eq:Phi_finite}. We observe that $g'(t)$ is well-defined and finite for every $t>0$ in view of Corollary~\ref{cor:uniformzero}, and our task is to show that
	\[-\int_0^{t_0} g'(t) t^{\alpha-1} \, dt < \infty.\]
	We resort again to a dyadic decomposition and the fact that $g'$ is nonpositive from Lemma~\ref{lem:kulikovg} (which is applicable due to Corollary~\ref{cor:uniformzero}) to see that
	\[-\int_0^{t_0} g'(t) t^{\alpha-1} \, dt \leq \sum_{n= 0}^\infty (g(t_0 2^{-n-1}) - g(t_0 2^{-n})) t_0^{\alpha-1}2^{-n(\alpha-1)}. \]
	Hence \eqref{eq:Phi_finite} will follow if we can show that 
	\begin{equation}\label{eq:indepN} 
		\sum_{n = 0}^N (g(t_02^{-n-1}) - g(t_02^{-n})) 2^{-n(\alpha-1)} \leq C 
	\end{equation}
	for a positive constant $C$ independent of $N$. By Lemma~\ref{lem:rho1}, we have $\|f_{\varrho}\|_{\alpha,p} \leq \|f\|_{\alpha,p}$ for all $\varrho$ in $(0,1)$. In addition, by Lemma~\ref{lem:alternative_norm}, we have
	\[\sum_{n = 0}^N (g_{\varrho}(t_02^{-n-1}) - g_{\varrho}(t_02^{-n})) 2^{-n(\alpha-1)} \leq C'\|f_{\varrho}\|^{p}_{\alpha,p}\]
	for almost all $0<\varrho<1$, where $C'$ does not depend on $N$. Therefore, the required estimate \eqref{eq:indepN} will follow if we can prove that for every fixed $t>0$ we have $g_\varrho(t)\to g(t)$. This is equivalent to $\mu_\varrho(t)\to \mu(t)$ for all $t > 0$ which holds by definition of $\mu$ and Corollary~\ref{cor:uniformzero} ensuring that all our sets are uniformly compactly embedded into the open unit disk for fixed $t > 0$. This finishes the proof of Theorem~\ref{thm:HL}. 
\end{proof}
\begin{remark}
	By carefully passing to the limit $\gamma\to \alpha^+$ we can in fact show that \eqref{eq:gamma_beta_F} holds for $\gamma = \alpha$ as well. 
\end{remark}
We wrap up the present section with the proof of Corollary~\ref{cor:majorant}.
\begin{proof}
	[Proof of Corollary~\ref{cor:majorant}] Part (a) of Corollary~\ref{cor:majorant} follows from Theorem~\ref{thm:HL} if we first use the Cauchy--Schwarz inequality, the binomial series \eqref{eq:binomialseries}, and Lemma~\ref{thm:hilbertspace} to the effect that 
	\begin{multline*}
		\sum_{k=0}^\infty |a_k| r^k \leq \left(\sum_{k=0}^\infty c_{2\alpha/p}(k) r^{2k}\right)^{\frac{1}{2}} \left(\sum_{k=0}^\infty \frac{|a_k|^2}{c_{2\alpha/p}(k)}\right)^{\frac{1}{2}} \\= (1-r^2)^{-\alpha/p} \|f\|_{A^2_{2\alpha/p}}\leq (1-r^2)^{-\alpha/p}\|f\|_{A^p_\alpha}. 
	\end{multline*}
	
	To establish Part (b) of Corollary~\ref{cor:majorant}, we fix $0<r<1$ and consider the function
	\[f_r(z) = \left(1-r z\right)^{-2\alpha/p}-2.\]
	We need to prove that 
	\begin{equation}\label{eq:suff} 
		(Mf_r(r))^p (1-r^2)^{\alpha} > \|f_r\|_{\alpha,p}^p. 
	\end{equation}
	Note that $f_r$ coincides with $\left(1-r z\right)^{-2\alpha/p}$ except that we have changed the sign of the constant term in its Taylor series. This implies in particular that 
	\begin{equation}\label{eq:Mbound} 
		Mf_r(r) = \frac{1}{(1-r^2)^{2\alpha/p}}. 
	\end{equation}
	However, using the second assertion of Theorem~\ref{thm:HL}, we find that
	\[\|f\|_{\alpha,p}^p < \|f\|_{A^2_{2\alpha/p}}^p = \left(\sum_{k=0}^\infty \frac{|a_k|^2}{c_{2\alpha/p}(k)}\right)^{\frac{p}{2}} = \frac{1}{(1-r^2)^{\alpha}},\]
	when $2<p<\infty$. This implies \eqref{eq:suff} in view of \eqref{eq:Mbound}. 
\end{proof}

\section{Comparison with Besov spaces} \label{sec:besov} 
We split the proof of Theorem~\ref{thm:besovcomp} into four parts. The first two parts are the inclusion in (a), where different arguments are used to handle the ranges $0 < p \leq 1$ and $1 < p \leq 2$. The third part is the inclusion from (b) and the final part of the proof is the assertion that $A^p_\alpha \neq B^p_\alpha$ for $p\neq2$.
\begin{proof}
	[Proof of Theorem~\ref{thm:besovcomp}~(a) for $0 < p \leq 1$] We note that the inequality
	\[\frac{d}{dr}|f(re^{i\theta})|^{p} \leq p |f(r e^{i\theta})|^{p-1} |f'(re^{i\theta})| \]
	yields the bound
	\[\int_{\frac12}^r \frac{d}{dr}M^p_p(r,f) (1-r^2)^{\alpha-1} dr \leq 2p \int_{r\mathbb{D}} |f(z)|^{p-1} |f'(z)| (1-|z|^2)^{\alpha-1} dm(z). \]
	We are done when $p=1$ by passing to the limit $r\to 1^-$. For $0<p<1$, we use H\"{o}lder's inequality to get 
	\begin{multline*}
		\int_{r\mathbb{D}} |f(z)|^{p-1} |f'(z)| (1-|z|^2)^{\alpha-1} \, dm(z) \\
		\leq \left(\int_{r\mathbb{D}} |f'(z)|^{p} \left|\frac{f'(z)}{f(z)} \right|^{2-p} (1-|z|^2)^{\alpha}\, dm(z)\right)^{\frac{1-p}{2-p}} \\
		\times \left(\int_{r\mathbb{D}} |f'(z)|^p(1-|z|^2)^{\alpha-2+p} \, dm(z) \right)^{\frac{1}{2-p}}. 
	\end{multline*}
	Now employing Lemma~\ref{lem:LP} and passing to the limit $r\to 1^-$, we get the desired bound
	\[ \| f\|_{\alpha,p}^p \leq C_{\alpha,p} \| f \|_{B_{\alpha}^p}. \qedhere\]
\end{proof}

For the next part of the proof we require two preliminary results. 
\begin{lemma}\label{lem:Lp0} 
	For $1\leq p\leq 2$, there exists a constant $C_p$ such that for all $f$ in $L^p([0, 1])$ we have
	\[ \left\| f \right\|_p^p \leq \left|\int_0^1 f(x)\, dx\right|^p+C_p \left\| f-\int_0^1f(x)\,dx \right\|_p^p.\]
\end{lemma}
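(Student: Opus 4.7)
The plan is to reformulate the inequality by decomposition. Write $a \coloneq \int_0^1 f(x)\,dx$ and $g \coloneq f - a$, so that $\int_0^1 g(x)\,dx = 0$ and the desired bound becomes $\|a + g\|_p^p \leq |a|^p + C_p \|g\|_p^p$. The heart of the argument will be the pointwise inequality
\[|a + w|^p \leq |a|^p + p|a|^{p-2} \mre(\bar a w) + C_p |w|^p\]
for $1 \leq p \leq 2$ and all $a, w \in \mathbb{C}$ (with the middle term interpreted as $0$ when $a = 0$). Applying this with $w = g(x)$ and integrating over $[0,1]$ kills the linear term since $\int_0^1 g = 0$, and the lemma follows.

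To establish the pointwise inequality, I would normalize. Dividing by $|a|^p$ and setting $s = w/a$ (for $a \neq 0$) reduces the claim to showing $|1 + s|^p \leq 1 + p\mre(s) + C_p |s|^p$ for all $s \in \mathbb{C}$. The key analytic input is the concavity of $x \mapsto x^{p/2}$ on $[0,\infty)$ for $p \leq 2$, which yields $(1 + \epsilon)^{p/2} \leq 1 + (p/2)\epsilon$ whenever $\epsilon \geq -1$. Substituting $\epsilon = |1 + s|^2 - 1 = 2\mre(s) + |s|^2 \geq -1$ gives
\[|1 + s|^p \leq 1 + p\mre(s) + \tfrac{p}{2} |s|^2.\]
For $|s| \leq 1$ we have $|s|^2 \leq |s|^p$ and the bound follows with $C_p = p/2$. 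For $|s| > 1$ the quadratic $|s|^2$ exceeds $|s|^p$, but here the crude estimate $|1 + s|^p \leq (1 + |s|)^p \leq 2^p |s|^p$ combined with $|p\mre(s)| \leq p|s| \leq p|s|^p$ recovers the bound with a larger constant; taking the maximum of the two regime constants gives a uniform $C_p$.

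The main obstacle, and what makes the statement genuinely nontrivial, is that the coefficient of $|\int_0^1 f|^p$ must be exactly $1$. A direct application of Minkowski followed by $(x + y)^p \leq 2^{p-1}(x^p + y^p)$ would introduce an unwanted factor $2^{p-1}$ in front of $|a|^p$, which the lemma rules out. The concavity-based pointwise bound is precisely what preserves the unit coefficient: it is a first-order Taylor expansion of $|a + w|^p$ around $w = 0$ with a quadratic remainder controlled by $|w|^p$ in both scaling regimes, whose linear term is annihilated upon integration by the mean-zero condition on $g$.
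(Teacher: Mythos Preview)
Your proof is correct and follows essentially the same approach as the paper's: both establish the pointwise inequality $|1+s|^p \leq 1 + p\mre s + C_p|s|^p$ by treating small and large $|s|$ separately, then integrate and use the mean-zero condition to kill the linear term. Your derivation of the small-$|s|$ bound via concavity of $x\mapsto x^{p/2}$ is a clean variant of the paper's Taylor-expansion argument, but the strategy is the same.
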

\begin{proof}
	We show first that for $1\leq p\leq 2$, there exists a positive constant $C_p$ such that 
	\begin{equation}\label{eq:allz} 
		|1+z|^p \leq 1 + p\mre{z} + C_p |z|^p 
	\end{equation}
	for all complex numbers $z$. For $|z|\leq \frac{1}{2}$, we use the linear approximations of $(1+z)^{\frac{p}{2}}$ and $(1+\overline{z})^{\frac{p}{2}}$ to see that $|1+z|^p - 1 - p\mre{z}$ is bounded by a constant times $|z|^2$. This yields \eqref{eq:allz} since $|z|^2\leq |z|^p$ when $|z|\leq 1$ and $p\leq 2$. For $|z|>\frac{1}{2}$, the left-hand side of \eqref{eq:allz} is bounded from above by $3^p|z|^p$ and the right-hand side is bounded from below by $(C_p-p2^{p-1})|z|^p$, and so \eqref{eq:allz} holds in this range as well if we choose $C_p\geq p2^{p-1}+3^p$.
	
	The lemma is trivially true when $\int_0^1 f(x)\,dx=0$, so we may assume that $\int_0^1f(x)\,dx=1$ by scaling. Applying \eqref{eq:allz} with $z=f-1$ and integrating over $[0,1]$, we get
	\[\|f\|_p^p \leq 1+ C_p \| f-1 \|_p^p ,\]
	since the integral of $\mre f -1 $ clearly vanishes. 
\end{proof}

Lemma~\ref{lem:Lp0} holds plainly with $C_1=1$ by the triangle inequality and with $C_2=1$ by orthogonality. In the latter case, the inequality is in fact an equality. Numerical examples suggest that $C_p>1$ in the range $1<p<2$.

We will also need the following characterization of Besov spaces which is a special case of a theorem of Dyakonov~\cite{Dyakonov98}*{Theorem~2.1}.
\begin{lemma}\label{lem:dyak} 
	Fix $0<\alpha<1$ and $1\leq p<\infty$. A function $f$ in $H^p$ is in $B_\alpha^p$ if and only if
	\[\int_0^1 \int_0^{2\pi} \int_0^{2\pi} |f(e^{it})-f(re^{i\theta})|^p \frac{(1-r^2)^{\alpha-1}}{|e^{it}-re^{i\theta}|^2} \frac{dt}{2\pi} \frac{d\theta}{2\pi} dr<\infty .\]
\end{lemma}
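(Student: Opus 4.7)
Since Lemma~\ref{lem:dyak} is quoted from \cite{Dyakonov98} as a special case of Dyakonov's Theorem~2.1, the shortest route in the paper is to invoke that reference directly. To prove the statement from scratch in the stated range, the plan is to use Cauchy's formula to translate between the boundary-difference triple integral and the standard Besov seminorm $\int_{\mathbb{D}} |f'(z)|^p (1-|z|^2)^{\alpha+p-2}\,dm(z)$, handling necessity and sufficiency separately.

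\emph{Necessity.} Assuming $f\in B_\alpha^p$, I would fix $r$, $\theta$, $t$ and connect $re^{i\theta}$ to $e^{it}$ by a piecewise path $\gamma_{r,\theta,t}$ of length comparable to $|e^{it}-re^{i\theta}|$ that stays at distance at least $c(1-|w|)$ from the boundary. Then $f(e^{it})-f(re^{i\theta})=\int_{\gamma_{r,\theta,t}} f'(w)\,dw$, and Minkowski's integral inequality (which requires $p\geq 1$, precisely the hypothesis of the lemma) together with Fubini converts the triple integral of the statement into a weighted area integral of $|f'|^p$. The key computation is that integrating $|e^{it}-re^{i\theta}|^{-2}\,dt$ concentrates the angular variable near $\theta$ and produces a factor equivalent to $(1-r)^{-1}$, which combines with $(1-r^2)^{\alpha-1}$ and a factor $(1-r)^p$ arising from the length of $\gamma$ to recover exactly the Besov weight $(1-|z|^2)^{\alpha+p-2}$.

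\emph{Sufficiency.} Assuming the triple integral is finite, I would use that $f\in H^p$ to legitimize the Cauchy representation
\[
f'(z) = \frac{1}{2\pi i}\oint_{|\zeta|=1} \frac{f(\zeta)-f(z)}{(\zeta-z)^2}\,d\zeta,
\]
which is valid because $\oint(\zeta-z)^{-2}\,d\zeta=0$. Applying Hölder's inequality (or Jensen's) with respect to the probability measure on the unit circle proportional to $(1-|z|)|\zeta-z|^{-2}\,|d\zeta|$ yields a pointwise bound of the form
\[
|f'(z)|^p \leq \frac{C}{(1-|z|)^{p-1}}\int_0^{2\pi} \frac{|f(e^{it})-f(z)|^p}{|e^{it}-z|^2}\,dt.
\]
Multiplying by $(1-|z|^2)^{\alpha+p-2}$, integrating over $\mathbb{D}$ in polar coordinates $z=re^{i\theta}$, and comparing $|f(e^{it})-f(z)|^p$ with $|f(e^{it})-f(re^{i\theta})|^p$ recovers, up to a constant, the triple integral of the statement, and hence $\|f\|_{B_\alpha^p}$ is finite.

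\emph{Main obstacle.} The delicate part is bookkeeping the weights in the sufficiency direction: the factor $(1-|z|)^{1-p}$ from the Hölder step has to combine cleanly with $(1-|z|^2)^{\alpha+p-2}$ and the $r$-integration to produce exactly $(1-r^2)^{\alpha-1}$, which is what pins down the range $0<\alpha<1$ and explains why the Cauchy kernel (rather than the Poisson kernel) is the right tool here. The restriction $p\geq 1$ enters through Minkowski's inequality in the necessity direction and through Jensen's inequality in the sufficiency direction, and a genuinely different argument would be needed to extend the characterization to $0<p<1$.
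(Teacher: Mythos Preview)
The paper gives no proof of this lemma at all: it is stated as a direct quotation of Dyakonov's Theorem~2.1, and the only ``proof'' is the citation. You correctly note this at the outset, so in that sense your proposal already matches the paper.

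Your additional sketch of a self-contained argument is a reasonable elaboration and is essentially the approach Dyakonov uses. The sufficiency direction is clean and essentially complete as you wrote it: the Cauchy representation plus Jensen against the Poisson probability measure gives exactly $|f'(z)|^p(1-|z|^2)^{p-1}\le C\int_0^{2\pi}|f(e^{it})-f(z)|^p|e^{it}-z|^{-2}\,dt$, and multiplying by $(1-|z|^2)^{\alpha-1}$ and integrating recovers the triple integral on the nose. The necessity direction is where the real work lies; the path-plus-Minkowski idea is standard, but the Fubini step converting the line integral of $|f'|$ along $\gamma_{r,\theta,t}$ into an area integral of $|f'|^p$ with the correct weight requires a careful dyadic or Carleson-box decomposition of the paths, and your sketch glosses over this. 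One minor inaccuracy: the weight identity $(1-|z|)^{1-p}\cdot(1-|z|^2)^{\alpha+p-2}\asymp(1-|z|^2)^{\alpha-1}$ holds for every $\alpha$, so it does not by itself ``pin down'' the range $0<\alpha<1$; that restriction in Dyakonov's theorem reflects that the first-difference characterization is only equivalent to the derivative seminorm when the smoothness index is below one.
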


We are now ready to continue with the second part of the proof of Theorem~\ref{thm:besovcomp}~(a).
\begin{proof}
	[Proof of Theorem~\ref{thm:besovcomp}~(a) for $1<p \leq 2$] Since $r\mapsto M_p(r,f)$ is an increasing function, we get the bound 
	\begin{multline*}
		\int_0^1 \left(\frac{d}{dr}M^p_p(r,f)\right) (1-r^2)^{\alpha-1} \,dr \\
		\leq C \int_0^1 \left(M_p^p(1,f)-M_p^p(r,f)\right) (1-r^2)^{\alpha-2} \,dr, 
	\end{multline*}
	with $C$ a constant depending on $\alpha$ and $p$. Here we again use the fact $f$ is in $H^p$, and so we declare that $M(1,f)\coloneqq \|f\|_{1,p}^p$. By Lemma~\ref{lem:dyak}, the proof will be complete if we can show that
	\[ M_p^p(1,f)-M_p^p(r,f) \leq C_p \int_{0}^{2\pi} \int_{0}^{2\pi} |f(e^{it})-f(re^{i\theta})|^p \frac{(1-r^2)}{|e^{it}-re^{i\theta}|^2} \frac{dt}{2\pi} \frac{d\theta}{2\pi}\]
	for $1<p\leq 2$. By Fubini's theorem, we may write
	\[ M_p^p(1,f)-M_p^p(r,f) = \int_{0}^{2\pi}\int_{0}^{2\pi} \left(|f(e^{it})|^p - |f(re^{i\theta})|^p\right) \frac{(1-r^2)}{|e^{it}-re^{i\theta}|^2} \frac{dt}{2\pi}\frac{d\theta}{2\pi} ,\]
	and so we are done if we can get 
	\begin{multline*}
		\int_{0}^{2\pi} |f(e^{it})|^p \frac{(1-r^2)}{|e^{it}-re^{i\theta}|^2} \frac{dt}{2\pi} - |f(re^{i\theta})|^p \\
		\leq C_p \int_{0}^{2\pi} |f(e^{it})-f(re^{i\theta})|^p \frac{(1-r^2)}{|e^{it}-re^{i\theta}|^2} \frac{dt}{2\pi} 
	\end{multline*}
	uniformly in $r$ and $\theta$. By the change of variables
	\[ e^{i\tau} \mapsto \frac{re^{i \theta}-e^{it}}{1-re^{-i\theta+it}} \]
	in the integrals, this can be simplified to the inequality
	\[ \| f \|_p^p-| f(0) |^p \leq C_p \| f-f(0) \|_p^p, \]
	which holds for all functions $f$ in $H^p$ in view of Lemma~\ref{lem:Lp0}. 
\end{proof}
Part (b) of Theorem~\ref{thm:besovcomp} can be proved in essentially the same way as done by Luecking~\cite{Luecking88} in the classical case $\alpha=1$. 
\begin{proof}
	[Proof of Theorem~\ref{thm:besovcomp}~(b)] We start from Luecking's inequality
	\[|f'(0)|^p \leq C_p \int_{|z|<\frac12} |f(z)|^{p-2} |f'(z)|^2 \log{\frac{1}{2|z|}} \,dm(z). \]
	Setting
	\[\psi_a(z)\coloneqq \frac{a-z}{1-\overline{a}z} \]
	and applying this inequality to $z\mapsto f(\psi_a(z))$, we find that 
	\begin{multline*}
		|f'(a)|^p (1-|a|^2)^p \\
		\leq C_p \int_{|z|<\frac12} |f(\psi_a(z))|^{p-2} |f'(\psi_a(z))|^2 |\psi'_a(z)|^2 \log{\frac{1}{2|z|}} \,dm(z). 
	\end{multline*}
	Now integrating this inequality with respect to $(1-|a|^2)^{\alpha-2}dm(a)$ over $\mathbb{D}$, we get the desired quantity on the left-hand side. On the right-hand side, we follow Luecking, and so we apply Fubini's theorem and make the change of variable $ a\mapsto w=\psi_a(z) $ in the integral. The only difference from the proof in \cite{Luecking88} is that we get an additional factor $(1-|w|^2)^{\alpha}$ in the integral on the right-hand side. To achieve this, we use that
	\[|w|^2 = |\psi_a(z)|^2 = 1-\frac{(1-|a|^2)(1-|z|^2)}{|1-\overline{a}z|^2}\]
	so that $1-|a|^2\asymp 1-|w|^2$ since $|z|<1/2$. Thus, the integral on the right-hand side is bounded by a constant times
	\[ \int_{\mathbb{D}} |f(w)|^{p-2} |f'(w)|^2 (1-|w|^2)^{\alpha} \,dm(w) ,\]
	as required. 
\end{proof}

In preparation for the final part of the proof of Theorem~\ref{thm:besovcomp}, we set
\[\varrho(z,w)\coloneqq \left|\frac{z-w}{1-\overline{z}w}\right|,\]
which is the pseudohyperbolic distance between $z$ and $w$ in $\mathbb{D}$. We say that a sequence $Z=(z_j)_{j\geq1}$ is uniformly discrete if $\inf_{j\neq k}\varrho(z_j,z_k)>0$. The following result is a consequence of \cite{Seip95}*{Theorem~2}.
\begin{lemma}\label{lem:sine} 
	Fix $\gamma>0$. Then there exists an analytic function $g$ on $\mathbb{D}$ whose zero set $Z$ is uniformly discrete and which satisfies
	\[|g(z)| \asymp \varrho(z,Z) (1-|z|^2)^{-\gamma}\]
	for $z$ in $\mathbb{D}$. 
\end{lemma}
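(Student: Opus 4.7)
The plan is to invoke Seip's construction of critical-density generating functions on the disc, namely \cite{Seip95}*{Theorem~2}, directly. The first step is to fix a uniformly discrete sequence $Z = (z_j)_{j \ge 1}$ in $\mathbb{D}$ whose Beurling--Seip density, measured in the hyperbolic metric, equals $\gamma$. Such sequences exist for every $\gamma > 0$ and can be produced explicitly, for example as suitably scaled orbits of a Fuchsian group or as appropriately dense hyperbolic lattices.

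Next, I would apply \cite{Seip95}*{Theorem~2}. For zero sets of critical density $\gamma$, this theorem produces an analytic function $g$ having simple zeros exactly at the points of $Z$ and satisfying the two-sided bound
\[ |g(z)| \asymp \varrho(z, Z)(1-|z|^2)^{-\gamma}. \]
The function $g$ is realized as a regularized Weierstrass-type infinite product adapted to the hyperbolic geometry of $\mathbb{D}$, with convergence factors chosen so that the product converges and exhibits the stated growth together with precisely the prescribed local vanishing.

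The main obstacle, were one to reconstruct the argument from scratch, would be the matching lower bound. The upper estimate $|g(z)| \lesssim \varrho(z, Z)(1-|z|^2)^{-\gamma}$ is comparatively routine: uniform discreteness of $Z$ together with its critical density controls the tail of the product. The lower bound is more delicate, as one must show that the renormalizing factors in the Weierstrass product do not produce spurious cancellation, so that the modulus of $g$ is genuinely captured by $\varrho(z, Z)$ near the zeros and by $(1-|z|^2)^{-\gamma}$ elsewhere. Seip's argument achieves this through a careful partition of $Z$ according to hyperbolic distance from $z$, combined with subharmonicity estimates on $\log|g|$ that propagate the local behavior uniformly across $\mathbb{D}$.
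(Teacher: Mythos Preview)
Your proposal is correct and takes essentially the same approach as the paper: the paper does not give an independent proof but simply records the lemma as a consequence of \cite{Seip95}*{Theorem~2}, which is exactly what you invoke. Your additional commentary on the structure of Seip's argument is accurate but goes beyond what the paper itself supplies.
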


All that remains in the proof of Theorem~\ref{thm:besovcomp} is to prove that $A^p_\alpha \neq B^p_\alpha$ when $0 < \alpha \leq 1$. The fact that $H^p\neq B_{1}^p$ is well known, as pointed out in the introduction. We are therefore left with the following.
\begin{proof}
	[Proof that $A^p_\alpha \neq B^p_\alpha$ for $0<\alpha<1$ and $p\neq2$] When $0<p<2$, we invoke Lemma~\ref{lem:sine} with $\gamma$ satisfying
	\[\frac{\alpha+p-1}{p} < \gamma < \frac{\alpha+1}{2}.\]
	Since $0<\gamma<1$ (due to our standing assumption that $\alpha+p>1$ for Besov spaces), there is then a function $f$ in $H^\infty$ such that $f'=g$. It is clear that
	\[\int_{\mathbb{D}} |f'(z)|^p (1-|z|^2)^{p-1} \,dm(z) =\infty,\]
	so $f$ is not in $B_\alpha^p$. On the other hand, by adding to $f$ a suitable constant, we may assume that also $1/f$ is in $H^\infty$. Then Lemma~\ref{lem:LP} shows that $f$ belongs to $A_{\alpha}^p$.
	
	We act similarly when $2<p<\infty$, the task being to identify a function $f$ not belonging to $A_{\alpha}^p$ such that 
	\begin{equation}\label{eq:besov} 
		\int_{\mathbb{D}} |f'(z)|^p (1-|z|^2)^{\alpha +p-2} \,dm(z) <\infty. 
	\end{equation}
	We choose $f$ as in the preceding case but now with
	\[\frac{\alpha+1}{2} < \gamma < \frac{\alpha+p-1}{p}.\]
	It is then immediate that \eqref{eq:besov} holds. By again adding a constant to $f$, we may ensure that both $f$ and $1/f$ are bounded. The fact that $f$ is not in $A_{\alpha}^p$ then follows from Lemma~\ref{lem:LP}. 
\end{proof}

\section{The shift operator and division by inner functions} \label{sec:shift} 
The starting point for the proof of both Theorem~\ref{thm:exp} and Theorem~\ref{thm:inner} is the formula 
\begin{equation}\label{eq:fgdiff} 
	\begin{split}
		\|f\|^p_{\alpha,p} - \|g\|^p_{\alpha,p} = &\int_0^1 \left(\frac{d}{dr} \left( M_p^p(r,f)-M_p^p(r,g)\right)\right)(1-r^2)^{\alpha-1}\, dr \\
		&\qquad\qquad\qquad\qquad\qquad\qquad\quad\,\,\,\,\,+ |f(0)|^p-|g(0)|^p. 
	\end{split}
\end{equation}
The basic idea is that cancellation between $M_p^p(r,f)$ and $M_p^p(r,g)$ as $r \to 1^-$ allows us to integrate by parts.
\begin{proof}
	[Proof of Theorem~\ref{thm:exp}] Since $M_p^p(r,Sf) = r^p M_p^p(r,f)$ and $Sf(0)=0$, the formula \eqref{eq:fgdiff} takes the form
	\[\|Sf\|^p_{\alpha,p} - \|f\|^p_{\alpha,p} = - \int_0^1 \frac{d}{dr} \left(M_p^p(r,f)(1-r^p)\right) (1-r^2)^{\alpha-1}\,dr - |f(0)|^p.\]
	Using that $f$ is in $A^p_1=H^p$ (since $\|f\|_{1,p} \leq \|f\|_{\alpha,p}$ for $0<\alpha<1$) and that
	\[\lim_{r\to 1^-} (1-r^p)(1-r^2)^{\alpha-1} = 0\]
	for $\alpha>0$, we can integrate by parts and get 
	\begin{equation}\label{eq:Sfest} 
		\|Sf\|_{\alpha,p}^p - \|f\|_{\alpha,p}^p = 2 (1-\alpha) \int_0^1 M_p^p(r,f) (1-r^p)(1-r^2)^{\alpha-2}\,rdr. 
	\end{equation}
	The right-hand side of \eqref{eq:Sfest} is positive when $f \not \equiv 0$, which demonstrates that $S$ is a strict expansion. Since $M_p^p(r,f) \leq \|f\|_{1,p}^p \leq \|f\|_{\alpha,p}^p$ (the second inequality follows from \eqref{eq:Apa}), we can also infer from \eqref{eq:Sfest} that
	\[\|Sf\|_{\alpha,p}^p \leq \left(1 + 2 (1-\alpha) \int_0^1 (1-r^p)(1-r^2)^{\alpha-2}\,rdr\right)\|f\|_{\alpha,p}^p.\]
	We complete the proof by noting that $M_p^p(r,f) = \|f\|_{1,p}^p = \|f\|_{\alpha,p}^p$ holds if and only if $f$ is a constant function. 
\end{proof}
\begin{proof}
	[Proof of Theorem~\ref{thm:inner}] We now use \eqref{eq:fgdiff} with $g \coloneq f/I$. If we can prove that 
	\begin{equation}\label{eq:rto1} 
		M_p^p(r,g)-M_p^p(r,f)=o\left((1-r^2)^{1-\alpha}\right), 
	\end{equation}
	then integration by parts yields
	\[\|f\|_{\alpha,p}^p - \|g\|_{\alpha,p}^p = 2(\alpha-1) \int_0^1 \left(M_p^p(r,g)-M_p^p(r,f)\right)(1-r)^{\alpha-2}\,rdr.\]
	Here the right-hand side is positive since we assume that $I$ is a nontrivial inner function and $f$ is nontrivial. Hence it remains only to establish \eqref{eq:rto1}. Since $M_p^p(r,f)\leq M_p^p(r,g)\leq M_p^p(1,f)$, it suffices to show that 
	\begin{equation}\label{eq:nondy} 
		M_p^p(1,f)-M_p^p(r,f)=o\left((1-r^2)^{1-\alpha}\right). 
	\end{equation}
	This follows if we use the dyadic decomposition 
	\begin{multline*}
		\sum_{n=0}^\infty \left(M_p^p(1-2^{-n-1},f) - M_p^p(1-2^{-n},f)\right) 2^{(1-\alpha)(n+1)} \\
		\leq 2 \int_0^1 \left(\frac{d}{dr} M_p^p(r,f)\right) (1-r^2)^{\alpha-1} \,dr. 
	\end{multline*}
	Indeed, this bound yields
	\[M_p^p(1-2^{-n-1},f) - M_p^p(1-2^{-n},f) = o(2^{-(1-\alpha)n}).\]
	Hence by summation, we then get
	\[M_p^p(1,f) - M_p^p(1-2^{-n},f) = o(2^{-(1-\alpha)n}),\]
	which in turn implies \eqref{eq:nondy} by monotonicity of $M_p^p(r,f)$ in $r$. 
\end{proof}

\section*{Acknowledgements} We are grateful to Jos\'{e} \'{A}ngel Pel\'{a}ez for enlightening comments on the history of the classical case $\alpha=1$ of Theorem~\ref{thm:besovcomp}. 

Part of this project was carried out while three of the authors (Aleksei Kulikov, Kristian Seip, Ilya Zlotnikov) were participating in the Intensive Research Programme on Modern Trends in Fourier Analysis at Recerca de Matem\`{a}tica in Barcelona during May--June 2025. They would like to thank the host and organizers for their hospitality.

\bibliography{bohrhp}

\end{document}